\documentclass[10pt,a4paper]{amsart}  

\usepackage{bbm,amsmath,amssymb,amsfonts,graphicx,color,subfig,booktabs,multirow,diagbox,mathtools}
\usepackage[hidelinks]{hyperref}

\usepackage[english]{babel}

\newcommand{\R}{\mathbb{R}}

\newcommand{\Ccal}{\mathcal{C}}

\def\spheresdp/{A}
\def\spacesdp/{B}

\newtheorem{defin}{Definition}[section]

\newtheorem{proposition}[defin]{Proposition}

\newtheorem{theorem}[defin]{Theorem}
\newtheorem{corollary}[defin]{Corollary}
\newtheorem{lemma}[defin]{Lemma}

\theoremstyle{definition}

\begin{document}

\title[Exact semidefinite programming bounds for packing problems]{Exact semidefinite programming bounds\\ for packing problems}

\author{Maria Dostert}
\address{M.~Dostert, \'{E}cole Polytechnique F\'{e}d\'{e}rale de Lausanne, SB TN, Station 8, 1015 Lausanne, Suisse / Switzerland}
\email{maria.dostert@epfl.ch}

\author{David de Laat}
\address{D.~de Laat, Delft Institute of Applied Mathematics, Delft
  University of Technology, P.O. Box 5031, 2600 GA Delft, The
  Netherlands} 
\email{d.delaat@tudelft.nl}

\author{Philippe Moustrou}
\address{P.~Moustrou, Department of Mathematics and Statistics, UiT - the Arctic University of Norway,
9037 Tromsø, Norway}
\email{philippe.moustrou@uit.no}

\keywords{Semidefinite programming, hybrid numeric-symbolic algorithm, spherical codes, packing problems}

\subjclass{52C17, 90C22}  

\date{July 9, 2020} 

\begin{abstract}
In this paper we give an algorithm to round the floating point output of a semidefinite programming solver to a solution over the rationals or a quadratic extension of the rationals.
This algorithm does not require the solution to be strictly feasible and works for large problems. 
We apply this to get sharp bounds for packing problems, and we use these sharp bounds to prove that certain optimal packing configurations are unique up to rotations. In particular, we show that the configuration coming from the $\mathsf{E}_8$ root lattice is the unique optimal code with minimal angular distance $\pi/3$ on the hemisphere in $\R^8$, and we prove that the three-point bound for the $(3, 8, \vartheta)$-spherical code, where $\vartheta$ is such that $\cos \vartheta = (2\sqrt{2}-1)/7$, is sharp by rounding to $\mathbb Q[\sqrt{2}]$. We also use our machinery to compute sharp upper bounds on the number of spheres that can be packed into a larger sphere.
\end{abstract}

\maketitle

\section{Introduction}

In this paper we consider the problem of extracting an exact solution from the  output of a numerical semidefinite programming solver. A particularly fitting application domain for this is extremal geometry, including packing problems where we ask for the size of a largest independent set in a graph whose set of vertices is a compact space.
A typical example is the \emph{$\vartheta$-spherical code} problem, where the vertex set is the unit sphere, and two distinct vertices $x$ and $y$ are adjacent when the inner product between $x$ and $y$ is at most $ \cos \vartheta$. The strongest known upper bounds   often use semidefinite programming \cite{bachoc08, laat15,dostert17}, which is an extension of linear programming where one optimizes over positive semidefinite matrices satisfying linear constraints. 
Since semidefinite programs are solved in floating point arithmetic, turning the numerical bounds into rigorous upper bounds requires additional work. 

To prove that a packing configuration of size $N$ is optimal, we just need any upper bound in the interval $[N,N+1)$. 
In this case, extracting a rigorous bound is easy, because it is enough to prove the existence of an exact solution of the semidefinite program whose objective is close to the objective of the floating point solution, which means we simply need to round a strictly feasible solution. 
However, if we want to prove uniqueness of such a configuration (see Section~\ref{info-exact-solution}), or if we aim at solving more general problems in extremal geometry where the optimal value is not an integer (for instance, energy minimization problems \cite{Cohn12, laat19}), then we need an exact bound. 
Extremal geometry thus provides large semidefinite programming problems where we look for an exact optimal solution, that may lie on the boundary of the positive semidefinite matrix cone.	

For the problem of finding exact sum of squares decompositions of polynomials, there exist several hybrid numeric-symbolic algorithms \cite{MR2474341, MR4072646, MR2500392, MR2854844, magron2018exact}, and some of them \cite{MR2500392, MR2854844} can be applied for special instances of polynomials that are not strictly feasible.
For general semidefinite programs, as required in our context, there exist symbolic algorithms \cite{MR3574590, MR3707858, MR3840381}, but they cannot be used for the size of problems we consider. 
Other approaches rely on facial reduction: While \cite{MR4044453} provides an exact method for sum of squares decompositions, more general techniques such as \cite{MR3844533} remain purely numerical. 
We propose a hybrid numeric-symbolic algorithm, which can be related to facial reduction, and is able to tackle large semidefinite problems arising in extremal geometry, where the optimal solutions are not necessarily strictly feasible.

\medskip

Since we consider problems where we have a candidate optimal configuration for which we have a numerically sharp semidefinite programming bound, we can include the objective value as a linear constraint, and we are only looking for a solution of the corresponding feasibility problem. The semidefinite programming solver returns a near feasible solution of this semidefinite program, given as a list of matrices whose entries are floating point numbers. For this solution the linear constraints are almost satisfied, and the eigenvalues of the matrices are positive or close to zero (but not necessarily nonnegative). 
If we expect to find a feasible solution with entries in a given field, the challenge consists in turning the floating point values into elements of this field in such a way that the linear constraints are satisfied and the matrices are positive semidefinite. 

The solver gives an approximate solution in the relative interior of the feasible set. This means that if the feasible set has the same dimension as the affine space defined by the linear constraints, the near zero eigenvalues will become exactly zero after projecting the approximate solution into the affine space. Since any positive eigenvalue bounded away from zero will remain positive after a small perturbation of the matrix entries, we thus find an exact feasible solution simply by projecting the approximate solution into the affine space. In prior works, the approach has been to include  additional linear constraints coming from the complementary slackness conditions of an optimal configuration in the hope that the above condition becomes valid \cite{Cohn12, bachoc09b}. However, in general, there is no guarantee at all that this works, and we were not able to prove Theorem~\ref{thm:UniquenessHemisphere} with this approach, which was the original motivation for this project. 

In this paper we develop a general procedure to extract an exact optimal solution from the numerical optimal solution of a semidefinite program. Our main idea is the following: We want to understand the eigenvectors corresponding to near-zero eigenvalues, in order to force these eigenvalues to become exactly zero after rounding. This task seems to be challenging, since the kernels can be big, and the solver does not take into account their structure: Even if these linear spaces afford a basis over the algebraic field we want to round to, computing a basis in floating point arithmetic will only provide messy approximations of linear \emph{real} combinations of such vectors. In order to extract a suitable basis from these numerical approximations, we use the LLL algorithm to detect equations that have to be satisfied by the kernel vectors. This provides all additional linear constraints that will ensure the semidefiniteness of the matrices after rounding. In general, even if the constraints and the objective are rational, an optimal solution of a semidefinite program might require high algebraic degree \cite{MR2546336}. However, for every problem we considered, the semidefinite program and the conditions implied by an optimal configuration were defined over the same field (which was either the field of rational numbers or a quadratic field), and we were able to find an exact optimal solution over this field.  

\medskip
   
A fundamental approach to compute upper bounds on the size of spherical $\vartheta$-codes is the linear programming bound of Delsarte, Goethals, and Seidel \cite{delsarte77}. Even if the semidefinite programming \emph{three-point bound} by Bachoc and Vallentin \cite{bachoc08} provides stronger bounds, there are very few known cases where the semidefinite programming bound is sharp while the linear programming bound is not. One such example is given in \cite{bachoc09b}, where Bachoc and Vallentin prove optimality and uniqueness of the Petersen code, by using \emph{ad-hoc} techniques to show that their bound is sharp. Our rounding procedure directly turns a numerical solution of the semidefinite program into an exact rational solution, and we recover the uniqueness of the Petersen code; see Section~\ref{sec:sphericalCodes}.
 
In \cite{Schuette51}, Sch\"utte and van der Waerden prove optimality of the $\vartheta$-code in $S^2$ with cardinality $8$ and $\cos \vartheta = (2\sqrt{2}-1)/7$, and Danzer \cite{Danzer86} proves uniqueness up to rotations. The proofs of these results are purely geometric and quite technical. Still, in \cite{bachoc09b} Bachoc and Vallentin mention that their numerical computations suggest that the semidefinite programming bound is tight. However, they do not provide an exact optimal solution, and due to the value of $\cos \vartheta$ it seems that there is no optimal rational solution. With the adaptation of our approach to quadratic fields, we obtain an optimal solution over $\mathbb{Q}[\sqrt{2}]$. Based on our optimal solution, we give a simplified uniqueness proof in Section~\ref{sec:sphericalCodes}.

An even more challenging problem is to determine the optimal size of a spherical code in a spherical cap, where the spherical cap with center $e\in S^{n-1}$ and angle $\phi$ is defined by
\[
\mathrm{Cap}^{n-1}(e, \phi) = \big\{ x \in S^{n-1}: e \cdot x  \geq \cos (\phi)\big\}.
\]
In this situation, the linear programming bound cannot be applied, but in \cite{bachoc092} Bachoc and Vallentin adapt their three-point bound, that becomes a two-point bound in this context, to get semidefinite programming upper bounds. For the hemisphere $\mathrm{Cap}^{n-1}(e, \pi/2)$, they got a numerically sharp bound for $n=8$, which is closely related to the famous $\mathsf{E}_8$ lattice: The $240$ minimal vectors of $\mathsf{E_8}$ give the unique optimal spherical $\pi/3$-code in dimension $8$ \cite{kabatiansky78, sloane81, bannai04}. If $e$ is any of these minimal vectors, the intersection of this configuration with $\mathrm{Cap}^7(e, \pi/2)$ is a $\pi/3$-code of cardinality $183$. Bachoc and Vallentin get a numerical bound very close to $183$, which proves the optimality of this configuration. Moreover they conjecture that this is the only optimal configuration up to isometry. Here by using our machinery, we provide an exact optimal rational solution, and prove this conjecture; see Section~\ref{sec:Hemisphere}.  

To find more sharp bounds, we also apply our techniques to the similar problem of packing unit spheres in a larger sphere, which has connections to material science, radio-surgical treatments, and communication theory; see \cite{Hallah2013}. We use our machinery of rounding to rationals and quadratic fields to find several exact sharp bounds, and we also use this to give families of sharp bounds for all dimensions; see Section~\ref{sec:ballsinballs}.

\section{Semidefinite programming bounds for packing problems}\label{sec:2sdpbound}

\subsection{Characterizations of invariant kernels}

In Section~\ref{sec:2and3} we give a derivation of the semidefinite programming bounds we use in this paper. For this we need to characterize certain invariant positive definite kernels. 

We start with the well-known case of $O(n)$-invariant kernels on the sphere. Let $K$ be a positive definite kernel on $S^{n-1}$, by which we mean it is a continuous function from $S^{n-1} \times S^{n-1}$ to $\R$ for which the matrices 
\[
\begin{pmatrix}K(x_i,x_j)\end{pmatrix}_{i,j=1}^m
\]
are positive semidefinite for all $m \ge 0$ and $x_1,\ldots,x_m \in S^{n-1}$. Such a kernel is said to be $O(n)$-invariant if 
$
K(\gamma x, \gamma y) = K(x,y)
$
for all $\gamma \in O(n)$ and $x,y \in S^{n-1}$. 
For each $k \geq 0$, let $P_k^n$ be the degree $k$ ultraspherical polynomial for $S^{n-1}$ normalized such that $P_k^n(1) = 1$. These are also known as the Gegenbauer polynomials with parameter $n/2-1$. The functions $(x,y) \mapsto P_k^n(x \cdot y)$ are $O(n)$-invariant positive definite kernels on $S^{n-1}$, and Schoenberg's characterization  says that each positive definite $O(n)$-invariant kernel on $S^{n-1}$ is of the form 
\[
K(x,y) = \sum_{k=0}^\infty c_k P_k^n(x \cdot y),
\]
for nonnegative numbers $c_k \geq 0$, where convergence is uniform and absolute \cite{schoenberg42}. 

Bachoc and Vallentin give a characterization for the $O(n-1)$-invariant kernels on the sphere \cite{bachoc08}. As they observe in \cite{bachoc092} this also provides a characterization for the $O(n-1)$-invariant kernels on a spherical cap 
\[
\mathrm{Cap}^{n-1}(e,\varphi) = \big\{ x \in S^{n-1} : x \cdot e \geq \cos(\varphi) \big\},
\]
where we view $O(n-1)$ as the stabilizer subgroup of $O(n)$ with respect to some fixed point $e \in S^{n-1}$.

To state the proposition we define the matrix $Y_k^n(u,v,t)$ by 
\[
Y_k^n(u,v,t)_{i,i'} = u^i v^{i'} ((1-u^2)(1-v^2))^{k/2}P_k^{n-1}\left(\frac{t-uv}{\sqrt{(1-u^2)(1-v^2)}}\right)
\]  
and its symmetrization
\[
\overline{Y_k^n}(u,v,t) = \frac{Y_k^n(u,v,t) + Y_k^n(v,u,t)}{2}.
\]
Given a topological space $X$ we use the notation $\Ccal(X \times X)$
for the space of continuous functions $X \times X \to \R$ and $\Ccal(X \times X)_{\succeq 0}$ for the cone of positive definite kernels on $X$. Furthermore, we denote by $\langle A, B \rangle$ the trace inner product $\mathrm{Trace}(AB^{\sf T})$.

The following proposition is by Bachoc and Vallentin \cite{bachoc08}, where the last part  about the uniform limit follows immediately from their work in combination with Theorem A.8 from \cite{laat19}.
\begin{proposition}\label{prop:Bachoc-Vallentin}
Let $e \in S^{n-1}$. For each integer $d \ge 0$ and positive semidefinite matrices $F_k \in \R^{(d-k+1) \times (d-k+1)}$, $k = 0,\ldots,d$, the function
\[
(x,y) \mapsto \sum_{k=0}^d  \Big\langle F_k, \overline{Y_k^n}(x\cdot e, y\cdot e, x\cdot y) \Big\rangle 
\]
is a $\mathrm{Stab}_{O(n)}(e)$-invariant positive definite kernel on $S^{n-1}$, and each $\mathrm{Stab}_{O(n)}(e)$-invariant positive definite kernel on $S^{n-1}$ is the uniform limit of such kernels.
\end{proposition}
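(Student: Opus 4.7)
The plan is to prove the two standard directions. For the easy direction, every kernel of the prescribed form is $\mathrm{Stab}_{O(n)}(e)$-invariant because $x\cdot e$, $y\cdot e$, and $x\cdot y$ are $\mathrm{Stab}_{O(n)}(e)$-invariants; positive definiteness of each block follows by rewriting it (up to a positive constant coming from the addition formula for Gegenbauer polynomials) as a Gram-type kernel $\sum_\ell f_{k,i,\ell}(x)\, f_{k,i',\ell}(y)$ with
\[
f_{k,i,\ell}(x) \;=\; (x\cdot e)^i \bigl(1-(x\cdot e)^2\bigr)^{k/2}\, Y_{k,\ell}(x'),
\]
where $x' = (x-(x\cdot e)e)/\sqrt{1-(x\cdot e)^2}$ and $\{Y_{k,\ell}\}_\ell$ is an orthonormal basis of the space $H_k^{(n-1)}$ of harmonic polynomials of degree $k$ on $S^{n-2}$. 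The passage from $Y_k^n$ to the symmetrization $\overline{Y_k^n}$ is free because $F_k$ may be taken symmetric.

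The converse is where the real work lies. The plan is to follow the standard Peter--Weyl/Bochner recipe for invariant positive definite kernels on a compact homogeneous space. First I would decompose $L^2(S^{n-1})$ under $O(n)$ into the harmonic subspaces $H_m^{(n)}$, and then restrict each $H_m^{(n)}$ to $\mathrm{Stab}_{O(n)}(e) \cong O(n-1)$ via the classical branching rule $H_m^{(n)}|_{O(n-1)} \cong \bigoplus_{j=0}^m H_j^{(n-1)}$. Regrouping by $O(n-1)$-isotype gives, for each $k\ge 0$, an $H_k^{(n-1)}$-isotypic component whose multiplicity space is infinite-dimensional; I would identify this multiplicity space with a space of polynomials in the polar coordinate $u = x\cdot e$, using the functions $f_{k,i,\ell}$ above as a convenient (non-orthonormal) spanning set. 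The abstract Bochner-type theorem then expresses any $\mathrm{Stab}_{O(n)}(e)$-invariant positive definite kernel on $S^{n-1}$ as a uniform limit of finite sums $\sum_{k,i,i'}(F_k)_{i,i'}\sum_\ell f_{k,i,\ell}(x)\,f_{k,i',\ell}(y)$ with $F_k\succeq 0$; for the uniform convergence one quotes Theorem A.8 of \cite{laat19}.

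To complete the argument I would collapse the sum $\sum_\ell Y_{k,\ell}(x')Y_{k,\ell}(y')$ via the addition formula to $c_{k,n} P_k^{n-1}(x'\cdot y')$ and use the identity
\[
x'\cdot y' \;=\; \frac{x\cdot y - (x\cdot e)(y\cdot e)}{\sqrt{(1-(x\cdot e)^2)(1-(y\cdot e)^2)}},
\]
which follows from decomposing $x$ and $y$ along $e$ and $e^\perp$. This reproduces exactly the entries of $Y_k^n(u,v,t)$ appearing in the statement, with the proportionality constant $c_{k,n}$ absorbed into $F_k$. The main obstacle is the bookkeeping: converting between the natural orthonormal basis of each multiplicity space (given by Jacobi-type polynomials in $u$) and the convenient monomial spanning set $\{u^i\}$, and verifying that all normalizing constants can be absorbed into the $F_k$ without breaking $F_k\succeq 0$. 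Once this linear-algebra accounting is in place, the uniform-convergence statement is a direct quotation of the general result.
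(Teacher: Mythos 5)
Your proposal is correct, but it is worth pointing out that the paper does not actually prove Proposition~\ref{prop:Bachoc-Vallentin}: it simply cites Bachoc and Vallentin \cite{bachoc08} for the kernel characterization and invokes Theorem~A.8 of \cite{laat19} to upgrade the convergence to uniform convergence. So there is no paper proof to compare against directly; the closest analogue is the paper's proof of Proposition~\ref{prop:balls} (the ball version), and your argument follows essentially the same template as that proof.

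Comparing your sketch with the paper's treatment of Proposition~\ref{prop:balls}: both build a symmetry-adapted system of functions, verify that it is dense, apply the abstract Theorem~A.8 from \cite{laat19}, and then identify the zonal matrices via the addition formula. Where you differ is in emphasis. You make the classical branching rule $H_m^{(n)}\vert_{O(n-1)} \cong \bigoplus_{j=0}^m H_j^{(n-1)}$ explicit and assemble the isotypic decomposition from it, whereas the paper's proof of the ball case writes down the symmetry-adapted functions $e_{k,i,j}$ directly and leaves the underlying representation-theoretic decomposition implicit. For the spherical cap, the branching rule is the correct extra ingredient (it is what replaces the trivial $\|x\|$-fibration of the ball), and your functions $f_{k,i,\ell}(x) = (x\cdot e)^i(1-(x\cdot e)^2)^{k/2}Y_{k,\ell}(x')$ are exactly the right symmetry-adapted system. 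Your computation $x'\cdot y' = (x\cdot y - (x\cdot e)(y\cdot e))/\sqrt{(1-(x\cdot e)^2)(1-(y\cdot e)^2)}$ plus the addition formula reproduces the entries of $Y_k^n$, your observation that the symmetrization $\overline{Y_k^n}$ is harmless because $F_k$ may be taken symmetric is correct, and absorbing the positive addition-formula constants into $F_k$ preserves positive semidefiniteness. The bookkeeping worry you flag (monomial versus orthonormal Jacobi-type bases for the multiplicity space) is also the one subtlety the paper itself has to address in the ball case, where it remarks that orthonormality for a measure is not quite satisfied but the argument of \cite{laat19} still goes through. In short: your argument is a correct and reasonably complete expansion of a proof the paper only cites.
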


Define the ball $B^n(R) = \{ x \in \R^n : \|x\| \leq R\}$. The following result is an adaptation of the above result with the sphere $S^{n-1}$ replaced by the ball $B^n(R)$.  Let 
\[
\big(Z_k^n(u,v,t)\big)_{i,i'} = u^{i} v^{i'} (uv)^k P_k^n\left(\frac{t}{uv}\right),
\]  
and set
\[
\overline{Z_k^n}(u,v,t) = \frac{Z_k^n(u,v,t) + Z_k^n(v,u,t)}{2}.
\]

\begin{proposition}\label{prop:balls}
Let $R > 0$. For each integer $d \ge 0$ and positive semidefinite matrices $F_k \in \R^{(d-k+1) \times (d-k+1)}$, $k = 0,\ldots,d$, the function
\[
(x,y) \mapsto \sum_{k=0}^d \Big\langle F_k,  \overline{Z_k^n}(\|x\|, \|y\|, x\cdot y) \Big\rangle 
\]
is an $O(n)$-invariant positive definite kernel on $B^n(R)$, and each $O(n)$-invariant positive definite kernel on $B^n(R)$ is the uniform limit of such kernels.
\end{proposition}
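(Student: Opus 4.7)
The plan is to mirror the proof of Proposition~\ref{prop:Bachoc-Vallentin}, replacing its slicing of $S^{n-1}$ by parallels orthogonal to $e$ with a slicing of $B^n(R)$ by concentric spheres. Since $O(n)$ acts transitively on each orbit $\|x\|\cdot S^{n-1}$, every $O(n)$-invariant continuous kernel $K$ on $B^n(R)$ depends only on $u=\|x\|$, $v=\|y\|$, and, for $u,v>0$, on the unit inner product $x'\cdot y' = x\cdot y/(uv)$.

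For the direction that each kernel of the stated form is positive definite, the key identity is the addition formula for spherical harmonics,
\[
(uv)^k\, P_k^n\!\left(\frac{x\cdot y}{uv}\right) = \frac{|S^{n-1}|}{\dim H_k^n}\sum_\ell \varphi_{k,\ell}(x)\,\varphi_{k,\ell}(y),
\]
where $\{\varphi_{k,\ell}\}_\ell$ is an orthonormal basis of homogeneous harmonic polynomials of degree $k$ on $\R^n$. The right-hand side is manifestly a positive definite polynomial kernel on $\R^n\times\R^n$. Writing $F_k=\sum_\alpha v^{(\alpha)}(v^{(\alpha)})^{\tsp}$ with $F_k\psd 0$ shows in the same way that $(x,y)\mapsto\sum_{i,i'}(F_k)_{i,i'}\|x\|^i\|y\|^{i'}$ is positive definite, so the Schur product of these two kernels, which is exactly $\langle F_k,Z_k^n(u,v,x\cdot y)\rangle$, is positive definite. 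Summing over $k$ preserves positive definiteness, and the symmetrization $\overline{Z_k^n}$ agrees with $Z_k^n$ in the contraction because $F_k$ is symmetric.

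For the converse, let $K$ be an $O(n)$-invariant continuous positive definite kernel on $B^n(R)$. For each $u,v\in(0,R]$, the function $(x',y')\mapsto K(ux',vy')$ on $S^{n-1}\times S^{n-1}$ is continuous and $O(n)$-invariant, hence expands uniformly as
\[
K(ux',vy') = \sum_{k=0}^\infty h_k(u,v)\,P_k^n(x'\cdot y'),
\]
with $h_k$ continuous on $(0,R]^2$. Integrating against $\varphi_{k,\ell}$ in each sphere factor and applying the addition formula recovers $\frac{|S^{n-1}|}{\dim H_k^n}(h_k(u_i,u_j))_{i,j}$, and positive definiteness of $K$ forces this matrix to be positive semidefinite for every finite collection $u_1,\dots,u_m\in(0,R]$. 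Using continuity of $K$ at the origin together with the vanishing $\varphi_{k,\ell}(0)=0$ for $k\geq 1$, one verifies that $\tilde h_k(u,v) := h_k(u,v)/(uv)^k$ extends continuously to $[0,R]^2$ as a positive definite kernel. One then concludes by uniformly approximating each $\tilde h_k$ on $[0,R]^2$ by polynomial positive definite kernels of the form $\sum_{0\le i,i'\le m}(F_k)_{i,i'}u^iv^{i'}$ with $F_k\psd 0$ (a standard Weierstrass plus Mercer decomposition argument) and truncating the Gegenbauer series at a finite $d$ uniformly on $B^n(R)^2$, using uniform continuity of $K$ and a tail estimate on the Gegenbauer expansion.

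The main obstacle is the divisibility claim $h_k(u,v)=(uv)^k\,\tilde h_k(u,v)$ with $\tilde h_k$ continuous up to the boundary $\{u=0\}\cup\{v=0\}$; this is precisely what produces the factor $(uv)^k$ built into $Z_k^n$. Establishing it requires analyzing the order of vanishing of $h_k$ as $u$ or $v$ tends to $0$, exploiting that $K$ is continuous at the interior point $0\in B^n(R)$ and that harmonic polynomials of positive degree vanish at the origin. This is a subtlety absent from the Bachoc--Vallentin spherical-cap setting, where the analogous boundary points lie on a regular orbit of the stabilizer.
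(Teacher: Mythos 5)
Your approach is genuinely different from the paper's. The paper does not prove the characterization from scratch: it invokes Theorem~A.8 of \cite{laat19} (a general Bochner-type theorem for invariant kernels given a symmetry-adapted system) and devotes the proof entirely to verifying that result's hypotheses, namely that the functions $e_{k,i,j}(x) = \|x\|^{k+i} Y^n_{k,j}(x/\|x\|)$ are dense in $\Ccal(B^n(R))$, form a symmetry-adapted system, and yield the claimed zonal matrices $\overline{Z_k^n}$ via the addition theorem. Your forward direction (addition formula plus the Schur product theorem, noting that $\sum_{i,i'}(F_k)_{i,i'}\|x\|^i\|y\|^{i'}$ is a PD kernel when $F_k\succeq 0$, and that $\overline{Z_k^n}$ and $Z_k^n$ contract the same way against symmetric $F_k$) is correct in outline and is what underlies the paper's identification of the zonal matrices.

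The problem is the converse, and the issue you flag as the ``main obstacle'' is not a removable technicality: the divisibility claim is, for merely continuous $K$, generally \emph{false}, not just hard. The Gegenbauer coefficient
\[
h_k(u,v) = c_k \int_{S^{n-1}}\!\int_{S^{n-1}} K(ux',vy')\,P_k^n(x'\cdot y')\,dx'\,dy'
\]
satisfies $h_k(0,v)=0$ for $k\ge 1$ because $K(0,\cdot)$ is rotationally constant and $P_k^n$ integrates to zero against constants. But the rate of vanishing as $u\to 0$ is controlled only by $|h_k(u,v)| \le C\,\omega_K(u)$ where $\omega_K$ is the modulus of continuity of $K$; there is no reason for $h_k$ to be $O(u^k)$, and hence no reason for $\tilde h_k = h_k/(uv)^k$ to extend boundedly, let alone continuously, to $[0,R]^2$. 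Positive definiteness of $K$ does not repair this: it constrains the matrices $(h_k(u_i,u_j))$ but not the order of vanishing. Since your entire approximation scheme proceeds termwise through the $\tilde h_k$, it collapses without this step. A viable repair would have to first uniformly approximate $K$ by a more regular invariant PD kernel (e.g., by mollification with an $O(n)$-equivariant kernel), establish the required vanishing order for the smooth approximant, and only then run the Weierstrass/Mercer/truncation argument --- which is, in effect, the machinery that Theorem~A.8 of \cite{laat19} packages and that the paper deliberately leans on rather than reproving.

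Two smaller remarks. First, the paper explicitly notes a subtlety you do not engage with: the $e_{k,i,j}$ are orthonormal for an $O(n)$-invariant inner product but not necessarily with respect to an $O(n)$-invariant measure on $B^n(R)$, and the proof of \cite[Thm.~A.8]{laat19} must be checked to go through under this weaker hypothesis. Second, your closing claim that the boundary collapse is ``absent from the Bachoc--Vallentin spherical-cap setting'' is not quite right: at the poles $u = \pm 1$ the stabilizer orbit also degenerates to a point, and the factor $((1-u^2)(1-v^2))^{k/2}$ in $Y_k^n$ plays exactly the role of $(uv)^k$ here. The difficulty is therefore of the same nature in both settings, not special to the ball.
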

\begin{proof}
By \cite[Theorem A.8]{laat19} we need to show that the $\overline{Z_k^n}$ are the zonal matrices for the space $B^n(R)$ with the action of $O(n)$. For this we define an orthonormal basis $Y^n_{k,j}(x)$, $j \in [d_k]$, for the space of spherical harmonics of degree $k$, which are the homogeneous polynomials of degree $k$ in $n$ variables that vanish under the Laplacian, and we set
\[
e_{k, i, j}(x) = \|x\|^{k+i}\, Y^n_{k,j} \left(\frac{x}{\|x\|}\right), \quad k \geq 0, \, i \geq 0, \, j \in [d_k].
\]
The span of the functions $e_{k,i,j}$ is dense in the space of continuous functions on $B^n(R)$. Moreover, these functions are symmetry adapted in the sense that there exists $O(n)$-equivariant linear maps $T_{i,i'}$ with $T_{k,i,i'} e_{k,i,j} = e_{k,i',j}$. This means these functions form a symmetry adapted system as required by \cite[Theorem A.8]{laat19}, except that the functions are orthonormal for an $O(n)$-invariant inner product as opposed to an $O(n)$-invariant measure on $B^n(R)$, but the proof in \cite{laat19} still works with this weaker condition.

The zonal matrices corresponding to the above symmetry adapted system are defined as
\[
\overline{Z_k^n}(x,y)_{i,i'} = \sum_{j \in d_k} e_{k, i, j}(x) \overline{e_{k, i', j}(y)},
\]
so the proof follows by the addition theorem for spherical harmonics:
\[
\sum_{j \in d_k} Y^n_{k,j}\left(\frac{x}{\|x\|}\right) \overline{Y^n_{k,j}\left(\frac{y}{\|y\|}\right)} = P_k^n\left(\frac{x}{\|x\|}\cdot \frac{y}{\|y\|}\right). \qedhere
\]
\end{proof}

\subsection{Derivation of the semidefinite programming bounds}\label{sec:2and3}

In this paper we give upper bounds for the cardinality of codes in spheres, codes in spherical caps, and codes in balls. The semidefinite programming formulations for spheres and spherical caps are not new, but for completeness we show how these are derived from the general two and three-point bounds. This puts everything, including our new semidefinite programming bounds for codes in balls, in a common framework. In addition it allows us to discuss in detail and supported by computations how the three-point bounds relate to the two-point bounds. 

\subsubsection{Two-point bounds}\label{sec:2pt}

The Lov\'asz $\vartheta$-number is a semidefinite programming upper bound on the independence number of a finite graph, where the \emph{independence number} is the size of a largest subset of the vertices where no two vertices are adjacent. In \cite{bachoc09} this is generalized to the \emph{spherical code graph} $G=(V,E)$, which is the graph with vertex set $V = S^{n-1}$ where two distinct vertices $x$ and $y$ are adjacent if $x \cdot y > \cos(\vartheta)$. This immediately generalizes to a topological packing graph $G = (V,E)$, which is a graph whose vertex set $V$ is a Hausdorff topological space where each finite clique is a subset of an open clique (which in particular forces the independence number to be finite) \cite{laat15}. This gives the optimization problem
\begin{align}\label{eq:theta}
\vartheta(G) = \inf \big\{ M &\in \R : K \in \Ccal(V \times V)_{\succeq 0}\\\nonumber
& K(x,x) \leq M-1 \text{ for } x \in V,\\\nonumber
& K(x,y) \leq -1 \text{ for } \{x,y\} \not\in E \big\}.
\end{align}
By integrating over the symmetry group $\Gamma$ of the graph $G$, one sees that we can restrict to $\Gamma$-invariant functions without affecting the optimal objective value. In \cite{bachoc09} it is shown that for the spherical code graph, Schoenberg's characterization reduces this to the Delsarte linear programming bound \cite{delsarte77}.

To consider $\vartheta$-codes in a spherical cap one just takes a spherical code graph and restricts the vertex set to  $\mathrm{Cap}^{n-1}(e, \varphi)$. By doing this, the symmetry group reduces from $O(n)$ to $O(n-1)$. So instead of Schoenberg's characterization, Bachoc and Vallentin use Proposition~\ref{prop:Bachoc-Vallentin} to derive the following formulation:
\begin{align}\label{eq:hemisphere}
\min \big\{ M &\in \R : F_0,\ldots, F_d \succeq 0,\, \\\nonumber
&\quad F(u,u,1) \leq M-1 \text{ for } \cos(\varphi) \leq u \leq 1,\\\nonumber
&\quad F(u,v,t) \leq -1 \text{ for } (u,v,t) \in \Theta\big\},
\end{align}
where 
\[
F(u,v,t) =\sum_{k=0}^d \left\langle F_k,\overline{Y_k^n}(u,v,t)\right\rangle
\]
and
\begin{align*}
\Theta = \big\{(u,v,t) :\;& \cos(\varphi) \leq u, v \leq 1, \, -1 \leq t \leq\cos(\vartheta),\\\nonumber
&\quad 1+2uvt-u^2-v^2-t^2 \ge 0\big\}.
\end{align*}

Problem \eqref{eq:hemisphere} is not yet a semidefinite program because we have infinitely many inequality constraints. However, since these are polynomial inequality constraints we can formulate, for each integer $\delta > 0$, a semidefinite programming upper bound  by replacing each constraint of the form 
\[
p \geq 0 \quad \text{on} \quad S := \big\{x \in \mathbb{R}^n : g_i(x) \geq 0 \text{ for } i \in [m]\big\},
\]
where $p,g_1,\ldots,g_m$ are polynomials, by the condition that there are sum-of-squares polynomials $q_0$ of degree $2\delta$ and $q_i$ of degree $2\delta-\mathrm{deg}(g_i)$ such that
\[
p(x) = q_0(x) + g_1(x) q_1(x) + \ldots + g_m(x) q_m(x).
\]

This is a semidefinite constraint because if $b_\delta(x)$ is a vector whose entries form a basis for the polynomials in $\R[x_1,\ldots,x_n]$ of total degree at most $\delta$, then a polynomial $q$ of degree $2\delta$ is a sum-of-squares polynomial if and only if there is a positive semidefinite matrix $Q$ such that $q(x) =  \langle Q, b_\delta(x)b_\delta(x)^{\sf T}\rangle$. It follows from Putinar's theorem \cite{putinar93} that we get arbitrary good upper approximations by taking $\delta$ sufficiently large. In the univariate case, for $\deg(p)=2d$, it is sufficient to take $\delta=d$, and in practice we set $\delta=d$ also in the  multivariate case.

Concretely, we replace the first polynomial inequality constraint in \eqref{eq:hemisphere} by
\begin{align*}
&F(u,u,1) - M + 1 + \big\langle Q_1, b_\delta(u)b_\delta(u)^{\sf T} \big\rangle\\
&\quad+ (u - \cos \varphi )(1-u) \big\langle Q_2, b_{\delta-1}(u)b_{\delta-1}(u)^{\sf T} \big\rangle = 0,
\end{align*}
where $Q_1$ and $Q_2$ are positive semidefinite matrices.

To replace the second polynomial inequality constraint in \eqref{eq:hemisphere} we first observe that by construction $F(u,v,t) = F(v,u,t)$ for all $u,v,t$, and we can exploit this symmetry to get a more efficient sums-of-squares characterization. 

We have
\[
\R[u,v,t] = \R[u+v, uv, t] \oplus (u-v) \R[u+v, uv, t],
\]
where $\R[u+v, uv, t]$ is the ring of invariant polynomials. Let $b_\delta=b_\delta(u+v,uv,t)$ be a vector whose entries form a basis for the space of polynomials in $\R[u+v, uv, t]$ of total degree (in the variables $u,v,t$) at most $\delta$. It follows that any sum-of-squares polynomial $q(u,v,t)$ of degree $2\delta$ that satisfies $q(u,v,t) = q(v,u,t)$ for all $u,v,t$ is of the form
\begin{equation}\label{eq:invsos}
q(u,v,t) = \big\langle X_1, b_\delta b_\delta^{\sf T}\big\rangle + (u-v)^2 \big\langle X_2, b_{\delta-1} b_{\delta-1} ^{\sf T}\big\rangle,
\end{equation}
where $X_1$ and $X_2$ are positive semidefinite matrices.

We can now replace the second polynomial constraint in \eqref{eq:hemisphere} by
\begin{align}\label{eq:tripleConstraint_symm}
& F(u,v,t) + 1 + q_1(u,v,t)  \\
&\qquad +[(u - \cos(\varphi))(1-u) + (v - \cos(\varphi))(1-v)]q_2(u,v,t) \nonumber\\
&\qquad+ (u - \cos(\varphi))(1-u)(v - \cos(\varphi))(1-v)q_3(u,v,t)  \nonumber\\
&\qquad+(t + 1)(\cos(\vartheta)-t)q_4(u,v,t) \nonumber\\
&\qquad+ (1+2uvt-u^2-v^2-t^2)q_5(u,v,t) = 0,\nonumber
\end{align}
where each $q_i$ is a sum-of-squares polynomial of the form \eqref{eq:invsos}, with $\delta=d$ for $q_1$, $\delta =d-1$ for $q_2$ and $q_4$, and $\delta=d-2$ for $q_3$ and $q_5$.

Note that this symmetric formulation is quite a bit better than a naive formulation not exploiting the $uv$-symmetry. For example, for the $\pi/3$-code problem in $\mathrm{Cap}^7(e, \pi/2)$ we need to take $d=9$, and by using the symmetries we reduce the the system from $110376$ variables to $37651$ variables.

We now give the direct proof that \eqref{eq:hemisphere} gives a valid upper bound, since we will need the details of this proof in Section~\ref{info-exact-solution}.
\begin{lemma}\label{lemma:wdhemisphere}
The optimal value of \eqref{eq:hemisphere} is an upper bound on the maximal size of a $\vartheta$-code in $\mathrm{Cap}^{n-1}(e,\varphi)$.
\end{lemma}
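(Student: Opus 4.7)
The plan is to show that any feasible point $(M, F_0, \ldots, F_d)$ of \eqref{eq:hemisphere} satisfies $|C| \le M$ for every $\vartheta$-code $C \subset \mathrm{Cap}^{n-1}(e, \varphi)$. The method is the standard double-counting argument behind the $\vartheta$-number bound: evaluate the kernel on $C \times C$ and sandwich its total mass between the positive definiteness lower bound and the bound coming from the pointwise inequalities.

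More precisely, given a feasible tuple, first invoke Proposition~\ref{prop:Bachoc-Vallentin} to see that
\[
K(x,y) = F(x \cdot e, y \cdot e, x \cdot y) = \sum_{k=0}^d \bigl\langle F_k, \overline{Y_k^n}(x\cdot e, y\cdot e, x\cdot y)\bigr\rangle
\]
defines a positive definite kernel on $S^{n-1}$. In particular,
\[
0 \le \sum_{x,y \in C} K(x,y) = \sum_{x \in C} K(x,x) + \sum_{\substack{x,y \in C \\ x \ne y}} K(x,y).
\]
I would then bound the two sums on the right using the constraints of \eqref{eq:hemisphere}.

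For the diagonal terms, since $x \in \mathrm{Cap}^{n-1}(e,\varphi)$ gives $\cos(\varphi) \le x \cdot e \le 1$, the first constraint applied with $u = x \cdot e$ yields $K(x,x) = F(x\cdot e, x\cdot e, 1) \le M-1$. For the off-diagonal terms I need to check that $(x\cdot e, y\cdot e, x\cdot y) \in \Theta$ for distinct $x, y \in C$. The bounds $\cos(\varphi) \le x\cdot e, y\cdot e \le 1$ follow from $C \subset \mathrm{Cap}^{n-1}(e,\varphi)$, and $-1 \le x \cdot y \le \cos(\vartheta)$ follows from the $\vartheta$-code property. The remaining inequality $1 + 2uvt - u^2 - v^2 - t^2 \ge 0$ is exactly the statement that the Gram determinant of $\{x, y, e\} \subset S^{n-1}$ is nonnegative, which holds automatically because any Gram matrix is positive semidefinite. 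Therefore the second constraint of \eqref{eq:hemisphere} yields $K(x,y) \le -1$ for all distinct $x, y \in C$.

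Putting the three estimates together gives
\[
0 \le |C|(M-1) - |C|(|C|-1) = |C|(M - |C|),
\]
so $|C| \le M$, which is the desired conclusion. There is no real obstacle here; the only substantive verification is the Gram determinant check that certifies $(x\cdot e, y\cdot e, x \cdot y) \in \Theta$, and everything else is a direct application of positive definiteness and the feasibility constraints.
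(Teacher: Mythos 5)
Your proof is correct and follows essentially the same double-counting argument as the paper's: sandwich $\sum_{x,y \in C} F(x\cdot e, y\cdot e, x\cdot y)$ between the positive-definiteness lower bound and the bound from the two feasibility constraints, then conclude $|C|(M - |C|) \geq 0$. The only cosmetic difference is that the paper obtains the lower bound by noting each $\sum_{x,y\in C}\overline{Y_k^n}(x\cdot e,y\cdot e,x\cdot y)$ is itself positive semidefinite, whereas you invoke Proposition~\ref{prop:Bachoc-Vallentin}; you also make explicit the (correct) Gram-determinant check that $(x\cdot e, y\cdot e, x\cdot y)\in\Theta$, which the paper leaves implicit.
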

\begin{proof}
Suppose $C$ is a $\vartheta$-code in $\mathrm{Cap}^{n-1}(e,\varphi)$ and $(M, F_0, \ldots, F_d)$ is a feasible solution to \eqref{eq:hemisphere}.  On the one hand we have 
\[
S := \sum_{k=0}^d \Big\langle F_k, \sum_{x,y \in C} \overline{Y_k^n}(x \cdot e, y \cdot e, x \cdot y)\Big\rangle \geq 0,
\]
since for every $k=0,\ldots,d$, the matrices $F_k$ and $\sum_{x,y \in C} \overline{Y_k^n}(x \cdot e, y \cdot e, x \cdot y)$ are positive semidefinite. On the other hand, since the conditions in \eqref{eq:hemisphere} are satisfied, we have
\begin{align*}
S & = \sum_{x \in C} \sum_{k=0}^d \left\langle F_k,\overline{Y_k^n}(x \cdot e, x \cdot e, 1)\right\rangle + \sum_{\substack{x,y \in C \\x\neq y}} \sum_{k=0}^d \left\langle F_k,\overline{Y_k^n}(x \cdot e, y \cdot e, x \cdot y)\right\rangle
\\ & \leq \sum_{x \in C} (M-1) + \sum_{\substack{x,y \in C \\x\neq y}} (-1)  = |C|(M-1) - |C|(|C|-1)  = |C|(M-|C|).
\end{align*}
The inequality $|C|\leq M$ follows immediately.
\end{proof}

Now we consider the problem of packing spheres of radius $r$ into a sphere of radius $R$. Here we consider the graph with vertex set $B^n(R-r)$, where two distinct vertices $x$ and $y$ are adjacent if $\|x-y\| < 2r$. Using Proposition~\ref{prop:balls}, the optimization problem $\vartheta(G)$ reduces to
\begin{align}\label{sdp:ballsinBalls}
\min\{ M &\in \R : F_0,\ldots, F_d \succeq 0,\\[-0.1em]\nonumber
&F(u,u,u^2) \leq M-1 \text{ for } 0 \leq u \leq R-r,\\\nonumber
&\qquad F(u,v,t)\leq -1 \text{ for } (u,v,t) \in \Omega\},
\end{align}
where
\[
F(u,v,t) =\sum_{k=0}^d \Big\langle F_k,\overline{Z_k^n}(u,v,t)\Big\rangle
\]
and 
\begin{align*}
\Omega = \big\{(u,v,t) :0 \leq u,v \le R-r, \,-uv \le t \le uv, \, u^2 + v^2 -2t - 4r^2 \ge 0\big\}.
\end{align*}
The polynomial inequality constraints can now be replaced by sums-of-squares constraints in the same way as above.

\subsubsection{Three-point bounds}

Fix a point $e \in S^{n-1}$. Since $\mathrm{Cap}^{n-1}(e, \pi) = S^{n-1}$, the linear programming bound \eqref{eq:theta} for spherical codes and the semidefinite programming bound \eqref{eq:hemisphere} for spherical caps, give the same bound when $\varphi = \pi$, but of course \eqref{eq:hemisphere} is much more difficult to compute since it uses less symmetry.

The semidefinite programming bound for spherical caps looks rather similar to the three-point bound for spherical codes; both use Proposition~\ref{geneal-3p-bound}. Before we give the derivation of the three-point bound, we first mention that improved bounds can already be obtained by computing the two-point bound \eqref{eq:hemisphere} for $\mathrm{Cap}^{n-1}(e,\pi-\vartheta)$ and then adding $1$ to the resulting value. This gives an upper bound because we can always rotate a spherical code so that the point $-e$ is in the code. Although this usually only gives a small improvement, in dimension $4$ it gives the upper bound $24.983$ for $d=10$, which shows that the Lov\'asz $\vartheta$-number is actually already enough to prove the optimality of the $24$-cell for the kissing number problem. 

Of course, for spherical code problems it is better to use the three-point bound, because it is equally difficult to compute and gives better bounds.  To get the full three-point bound we have to also derive constraints coming from functions on $V \times V \times V$, where $V = S^{n-1}$. We follow the derivation from \cite{laat18}, where a general formulation for  $k$-point bounds is given. For $k=3$ we get
\begin{align}\label{geneal-3p-bound}
\mathrm{inf}\big\{M &\in \R : T \in \Ccal(V \times V \times I_1)_{\succeq 0},\\\nonumber
& B_3 T(S) \leq M-1 \text{ for } S \in I_{=1},\\\nonumber
& B_3 T(S) \leq -2 \text{ for } S \in I_{=2},\\[-0.1em]\nonumber
& B_3 T(S) \leq 0 \text{ for } S \in I_{=3} \big\},
\end{align}
where $I_{=k}$ is the set of independent sets of cardinality $k$ in the spherical code graph, and $I_k$ is the union of $I_{=i}$ for $0 \leq i \leq k$.
Here $B_3 \colon \Ccal(V \times V \times I_1)_{\mathrm{sym}} \to \Ccal(I_3 \setminus
\{\emptyset\})$ is the operator defined by
\begin{equation}\label{eq:operatorBk}
B_3 T(S) = \sum_{\substack{Q \subseteq S \\ |Q| \leq 1}} \;\sum_{\substack{x, y 
		\in S \\ Q \cup \{x, y\} = S}} T(x, y, Q).
\end{equation}
In \eqref{geneal-3p-bound} we can restrict to $O(n)$-invariant functions. As shown in \cite{laat18}, using Proposition~\ref{prop:Bachoc-Vallentin}, Problem~\eqref{geneal-3p-bound} then reduces to the Bachoc-Vallentin bound: Set
\[
T(x,y,\emptyset) = \sum_{k=0}^d a_k P_k^n(x \cdot y)
\]
and
\[
T(x,y,\{z\}) = \sum_{k=0}^d \Big\langle F_k, S_k^n(x\cdot z, y\cdot z, x\cdot y)\Big\rangle,
\]
where $$S^n_k =\frac{1}{6}\sum_{\sigma \in S_3} \sigma Y^n_k$$ in which we sum over the  permutations on three elements and each element $\sigma \in S_3$ acts on $Y_k^n$ by permuting its arguments. Then, \eqref{geneal-3p-bound} reduces to
\begin{align}\label{eq:3ptsphere}
\min\big\{ M & \in \R : a_0,\ldots,a_d \geq 0, \, F_0,\ldots, F_d \succeq 0,\\[-0.3em]\nonumber
&\sum_{k=0}^d a_k + F(1,1,1) \leq M-1,\\[-0.8em]\nonumber
&\qquad\sum_{k=0}^d a_k P_k^n(u) + 3F(u,u,1) \leq -1 \text{ for } -1 \leq u \leq \cos(\vartheta),\\\nonumber
&\qquad\qquad F(u,v,t) \leq 0 \text{ for } (u,v,t) \in \Delta\big\},
\end{align}
where
\[
F(u,v,t) =\sum_{k=0}^d \Big\langle F_k, S_k^n(u,v,t)\Big\rangle
\]
and 
\[
\Delta = \big\{(u,v,t) : -1 \leq u,v,t \leq \cos(\vartheta), \, 1+2uvt-u^2-v^2-t^2 \ge 0\big\}.
\]

Analogously to the two-point bound, we can use sums-of-squares relaxations to formulate semidefinite programming upper bounds. For this we write the last condition as
\begin{align*}
&F(u,v,t) + q_0(u,v,t) + p(u)q_1(u,v,t)+ p(v)q_2(u,v,t) \\
&\quad + p(t) q_3(u,v,t) + (1+2uvt-u^2-v^2-t^2) q_4(u,v,t) = 0,
\end{align*}
where $p(u) = (\cos(\theta) -u )(1-u)$ and $q_0, \ldots, q_4$ are sum-of-squares polynomials. 

The function $F(u,v,t)$ is symmetric in $u,v,t$, and as in \cite{machado16} we can use this symmetry to give a more efficient characterization using \cite{Gatermann04}. First we reformulate the sums-of-squares factorization as
\begin{align*}
F(u,v,t) + q_0(u,v,t) + \sum_{i=1}^4 s_i q_i(u,v,t)  = 0,
\end{align*}
where
\begin{equation*}
\label{eq:delta-polys}
\begin{aligned}
s_1 &= p(u) + p(v) + p(t),&s_2 &= p(u)p(v) + p(u)p(t) + p(v)p(t),\\
s_3 &= p(u)p(v)p(t),&s_4 &= 1 + 2uvt - u^2 - v^2 - t^2.
\end{aligned}
\end{equation*}
Now, without loss of generality,  we may assume the sum-of-squares polynomials $q_0,\ldots, q_4$ to be symmetric in $u,v,t$. 
Let
$$\theta_1 = u+v+t, ~~\theta_2 = uv+ut+vt, ~~\theta_3  = uvt,$$ 
and let $b_\delta=b_\delta(\theta_1,\theta_2,\theta_3)$  be a vector whose entries form a basis for the space of polynomials in the invariant ring $\R[\theta_1,\theta_2, \theta_3]$ of total degree (in the variables $u,v,t$) at most $\delta$. 
By \cite{Gatermann04}, for each $S_3$-invariant sum-of-squares polynomial $p$ of degree $2\delta$ there are positive semidefinite matrices $Q_1,Q_2,Q_3$ such that
\[
p(u,v,t) = \Big\langle Q_1, b_\delta b_\delta^{\sf T} \otimes \Pi_1 \Big\rangle + \Big\langle Q_2, b_{\delta-3}b_{\delta-3}^{\sf T} \otimes \Pi_2\Big\rangle  + \Big\langle Q_3, b_{\delta-2}b_{\delta-2}^{\sf T} \otimes \Pi_3\Big\rangle
\]
where
\begin{align*}&\Pi_1 = 1, ~~~~~\Pi_2 = \theta_1^2\theta_2^2 - 4\theta_2^3 - 4\theta_1^3\theta_3 + 18\theta_1\theta_2\theta_3 - 27 \theta_3^2,\\
&\Pi_3 = \begin{pmatrix}
2\theta_1^2-6\theta_2 & -\theta_1\theta_2 + 9\theta_3 \\
-\theta_1\theta_2 + 9\theta_3 & 2\theta_2^2 - 6\theta_1\theta_3
\end{pmatrix}.
\end{align*}

\begin{lemma}\label{lem:3pt-bound}
The optimal value of the semidefinite program given in (\ref{eq:3ptsphere}) is an upper bound on the maximal size of a $\vartheta$-code in $C^{n-1}(e, \varphi)$.
\end{lemma}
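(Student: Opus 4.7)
The plan is to prove the statement directly by following the strategy behind Lemma~\ref{lemma:wdhemisphere}, but with a triple-indexed sum that couples the Schoenberg coefficients $a_k$ and the matrices $F_k$. Since the SDP (\ref{eq:3ptsphere}) itself does not involve $\varphi$, it is in fact enough to establish the bound for any $\vartheta$-code $C \subseteq S^{n-1}$; restricting to $C \subseteq \mathrm{Cap}^{n-1}(e, \varphi)$ is then automatic.

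Given a $\vartheta$-code $C$ and a feasible solution $(M, a_0, \ldots, a_d, F_0, \ldots, F_d)$, I would consider the quantity
\[
\Sigma \;=\; \sum_{k=0}^d a_k \sum_{x, y \in C} P_k^n(x \cdot y) \;+\; \sum_{z \in C} \sum_{k=0}^d \Big\langle F_k, \sum_{x, y \in C} S_k^n(x \cdot z, y \cdot z, x \cdot y)\Big\rangle.
\]
The first sum is nonnegative because each $a_k \geq 0$ and the matrix $(P_k^n(x \cdot y))_{x, y \in C}$ is positive semidefinite by Schoenberg's theorem. For the second sum, Proposition~\ref{prop:Bachoc-Vallentin} implies that for every fixed $z \in S^{n-1}$ the block matrix $(Y_k^n(x \cdot z, y \cdot z, x \cdot y))_{(x, i), (y, i')}$ is positive semidefinite, so specializing the test vector to be independent of $x$ shows that $\sum_{x, y \in C} Y_k^n(x \cdot z, y \cdot z, x \cdot y)$ is a positive semidefinite $(d-k+1) \times (d-k+1)$ matrix; the same holds for its $S_3$-symmetrization $S_k^n$. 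Combined with $F_k \succeq 0$, this gives $\Sigma \geq 0$.

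Next I would expand $\Sigma$ according to the number of distinct elements among $x, y$ for the first sum and among $x, y, z$ for the second. The diagonal contributions (from $x = y$ and from $x = y = z$) give a total of $|C|\bigl(\sum_k a_k + F(1, 1, 1)\bigr)$, bounded above by $|C|(M - 1)$ by the first SDP constraint. For each unordered pair $\{x_0, y_0\} \subseteq C$ with $x_0 \neq y_0$, setting $u = x_0 \cdot y_0$, the two ordered pairs feeding the first sum and the six ordered triples with $|\{x, y, z\}| = 2$ feeding the second sum contribute, after using the $S_3$-symmetry of $S_k^n$, a total of $2 \sum_k a_k P_k^n(u) + 6 F(u, u, 1)$, which is $\leq -2$ by the second SDP constraint. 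Finally, ordered triples of three distinct points of $C$ contribute $F(x \cdot z, y \cdot z, x \cdot y) \leq 0$, since $(x \cdot z, y \cdot z, x \cdot y) \in \Delta$ by the $\vartheta$-code property together with the Gram-determinant inequality for three unit vectors. Summing yields $\Sigma \leq |C|(M - 1) - |C|(|C| - 1) = |C|(M - |C|)$, which combined with $\Sigma \geq 0$ gives $|C| \leq M$.

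The main obstacle is the bookkeeping in the size-two regime: one must correctly enumerate the six ordered triples in $C^3$ whose underlying set is a given pair $\{x_0, y_0\}$ and use the symmetry of $S_k^n$ in its three arguments to collapse their contributions, together with those of the two ordered pairs from the first sum, into the expression $2(\sum_k a_k P_k^n(u) + 3 F(u, u, 1))$ that matches the second SDP constraint and explains the factor $3$ in front of $F(u, u, 1)$ there.
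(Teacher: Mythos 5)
Your proof is correct and follows essentially the same route as the paper: sum the kernel over all ordered triples in $C^3$, use positive semidefiniteness to get a lower bound of $0$, then expand by the multiplicity pattern of $(x,y,z)$ and apply the three SDP constraints to get the upper bound $|C|(M-|C|)$. The only (cosmetic) difference is that you fold the Gegenbauer term $\sum_k a_k \sum_{x,y} P_k^n(x\cdot y)$ into the nonnegative quantity $\Sigma$ from the start, whereas the paper keeps $S$ as the three-point part alone and absorbs that term later in the chain of inequalities via condition v)-style nonnegativity.
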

\begin{proof}
Let $(M, a_0, \ldots, a_d, F_0, \ldots, F_d)$ be a feasible solution of problem~\eqref{eq:3ptsphere}, and let $C$ be a $\vartheta$-code in $S^{n-1}$. 

For each $k \in \{0,\ldots, d\}$, the matrices $\sum_{(x,y,z) \in C^3}\overline{Y_k^n}(x\cdot z, y\cdot z, x\cdot y)\rangle$ and $F_k$ are positive semidefinite, so
$$S := \sum_{(x,y,z) \in C^3} F(x\cdot z, y\cdot z, x\cdot y) =   \sum_{k=0}^d\Big\langle F_k, \sum_{(x,y,z) \in C^3}\overline{Y_k^n}(x\cdot z, y\cdot z, x\cdot y)\Big\rangle \geq 0.$$
On the other hand, $S$ is equal to
\begin{align*}
|C| ~F(1,1,1) +\sum_{\substack{( x,z) \in C^2 \\x\neq z}} 3~F(x\cdot z, x\cdot z, 1) + \sum_{\substack{(x,y,z) \in C^3 \\x\neq z, y \neq z, x\neq y}} F(x\cdot z, y\cdot z, x\cdot y)
\end{align*}
In the above equation, we use that $F(u,v,t)$ is invariant under the permutations of $(u,v,t)$.  
Since $F$ has to satisfy the constraints in (\ref{eq:3ptsphere}), we get
\begin{align*}
S &\leq |C| \left(M-1 - \sum_{k=0}^d a_k\right) - |C|(|C|-1)- \sum_{ x, y\in C} \left(\sum_{k=0}^da_kP^n_k(x\cdot y)\right) \\
 &\leq |C|(M-1)-|C|(|C|-1) - \sum_{k=0}^d a_k\left(|C| + \sum_{\substack{( x,y) \in C^2 \\x\neq y}} P^n_k(x\cdot y)\right) \\
 &\leq |C|(M-1)-|C|(|C|-1) - \sum_{k=0}^d a_k \sum_{ x, y \in C} P^n_k(x\cdot y) \\
&\leq |C|(M-1)-|C|(|C|-1).
\end{align*}
So we have shown $0 \leq S \leq |C|(M-1)-|C|(|C|-1)$, which implies $|C| \leq M$.
\end{proof}

\subsection{Information from exact sharp solutions}\label{info-exact-solution}

Since all our programs give upper bounds on the size of optimal configurations, which has to be an integer, one might wonder why we are interested in exact bounds. Indeed, if we already know a configuration $\mathcal{C}$ of $M$ points, any upper bound strictly lower than $M+1$ ensures the optimality of $\mathcal{C}$. 

First, these bounds are \textit{a priori} only upper bounds on the independence number of the corresponding graph, and it is interesting to point out when these bounds give exactly the independence number. For example, Bachoc and Vallentin proved in \cite{bachoc09b} that for $\vartheta$-codes in four dimensions, where $\cos \vartheta = 1/6$, the three-point bound gives exactly $10$ (even though the two-point bound is not sharp here). 

The second interest is geometric: from the proof of Lemma~\ref{lemma:wdhemisphere}, we can see that any feasible solution reaching $M$ as objective value provides additional information regarding optimal solutions. For any code $C$ such that $|C|=M$, all the inequalities in the proof of Lemma~\ref{lemma:wdhemisphere} have to be equalities. We get the so-called \textit{complementary slackness} conditions:

\begin{corollary}\label{cor:compl1}
Let $(M, F_0, \ldots, F_d)$ be a feasible solution to \eqref{eq:hemisphere}. If $C$ is a $\vartheta$-code in $C^{n-1}(e,\varphi)$ with cardinality $|C|=M$, then the following equalities hold:
\begin{itemize}
\item[i)] for every $k=0,\ldots,d$,
\[
\Big\langle F_k, \sum_{x,y \in C} \overline{Y_k^n}(x \cdot e, y \cdot e, x \cdot y)\Big\rangle = 0,
\]
\item[ii)] for every $x$ in $C$, 
\[
F(x \cdot e, x \cdot e, 1) = M-1,
\]
\item[iii)] for every distinct $x,y$ in $C$, 
\[
F(x \cdot e, y \cdot e, x \cdot y) = -1.
\]
\end{itemize}
\end{corollary}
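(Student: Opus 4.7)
The plan is simply to revisit the proof of Lemma~\ref{lemma:wdhemisphere} and observe that, under the extra hypothesis $|C| = M$, every inequality in that chain must be an equality. Let $S = \sum_{k=0}^d \bigl\langle F_k, \sum_{x,y \in C} \overline{Y_k^n}(x\cdot e, y\cdot e, x\cdot y)\bigr\rangle$ be the quantity analyzed in that lemma. We have the lower bound $S \ge 0$ (coming from the fact that each $F_k$ and each summed zonal matrix is positive semidefinite, so each trace inner product is nonnegative), and the upper bound $S \le |C|(M - |C|)$ (coming from the two families of SDP constraints applied termwise). Specializing to $|C| = M$ gives $S = 0$.

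For part (i), the individual summands $\bigl\langle F_k, \sum_{x,y\in C}\overline{Y_k^n}(x\cdot e, y\cdot e, x\cdot y)\bigr\rangle$ are each nonnegative, and they sum to $S = 0$, so each must vanish separately. This is exactly the claim of (i). The positivity of the summed zonal matrix is the same fact that was invoked in the proof of Lemma~\ref{lemma:wdhemisphere}, ultimately coming from Proposition~\ref{prop:Bachoc-Vallentin}.

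For parts (ii) and (iii), I expand $S$ termwise as in the lemma:
\[
S = \sum_{x \in C} F(x\cdot e, x\cdot e, 1) + \sum_{\substack{x,y \in C \\ x \ne y}} F(x\cdot e, y\cdot e, x\cdot y).
\]
The constraints in \eqref{eq:hemisphere} bound the first sum by $|C|(M-1)$ termwise and the second sum by $-|C|(|C|-1)$ termwise, and with $|C| = M$ the total equals $0 = S$. Since the sum of the upper bounds is attained, every individual term must attain its bound, which gives $F(x\cdot e, x\cdot e, 1) = M-1$ for each $x \in C$ and $F(x\cdot e, y\cdot e, x\cdot y) = -1$ for each ordered pair of distinct points in $C$.

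There is no real obstacle here: the corollary is essentially a reading of the saturation of the primal–dual inequalities established in Lemma~\ref{lemma:wdhemisphere}. The only mild care required is to keep the two sources of nonnegativity separate (the $F_k$ versus zonal matrix pairing on the one hand, and the scalar constraints $\le M-1$ and $\le -1$ on the other), since it is from the \emph{first} that (i) follows and from the \emph{second} that (ii) and (iii) follow.
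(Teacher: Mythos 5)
Your proposal is correct and takes essentially the same approach as the paper, which states only that all inequalities in the proof of Lemma~\ref{lemma:wdhemisphere} must become equalities when $|C|=M$. You have spelled out exactly this reasoning, correctly separating the nonnegativity coming from the positive semidefinite pairings (which yields (i)) from the scalar constraint saturation (which yields (ii) and (iii)).
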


Conditions ii) and iii) are of special interest. If we define the polynomial
\[
P(u)=F(u, u, 1) - (M-1),
\]
then for every point $x$ in an optimal code, the innerproduct $e \cdot x$ is in the set $R$ of roots of $P$. This gives finitely many possibilities for these innerproducts. Then, for every $u,v\in R$, we can define 
\[
P_{u,v}(t)= F(u, v, t) + 1,
\]
and for every pair of distinct points $(x,y)$ in an optimal code, the innerproduct $x\cdot y$ has to be a root of one such polynomial. With this procedure, we get all the possible innerproducts occuring in the configuration. Once that we get all the possible triples $(u,v,t)$, we can even use i) to get the number of occurences of each triple, by solving a linear system.

If we consider the semidefinite programming bound in \eqref{sdp:ballsinBalls} for the number of spheres that can be packed in a given sphere, we will get the possible norms of the centers of the spheres, and the possible innerproducts among these centers. 

Analogously, three-point bounds also provide complementary slackness conditions. From Lemma~\ref{lem:3pt-bound}, we get:
\begin{corollary}\label{cor:3pt-cs}
Let $(M, a_0, \ldots, a_d, F_0, \ldots, F_d)$  be a feasible solution of the semidefinite program (\ref{eq:3ptsphere}). Let $C$ be a $\vartheta$-code in $S^{n-1}$. If $M = |C|$, then the following properties hold:
\begin{itemize}
\item[i)] $$\sum_{k=0}^d a_k + F(1,1,1) = M-1,$$
\item[ii)] for each two distinct $x,y \in C$, $$\sum_{k=0}^d a_k P_k^n(u) + 3F(x\cdot y, x\cdot y,1) = -1,$$
\item[iii)]for each three distinct $x,y, z \in C$, $$F(x\cdot z, y\cdot z, x\cdot y)  = 0,$$
\item[iv)] for each $k \in \{0, \ldots, d\}$, $$\Big\langle F_k, \sum_{(x,y,z) \in C^3}\overline{Y_k^n}(x\cdot z, y\cdot z, x\cdot y)\Big\rangle = 0,$$
\item[v)] for each $k \in \{0,\ldots, d\}$,  $$ a_k\sum_{x,y \in C} P^n_k(x\cdot y) = 0.$$
\end{itemize}
\end{corollary}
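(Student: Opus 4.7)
My plan is to revisit the proof of Lemma~\ref{lem:3pt-bound} and observe that the hypothesis $|C|=M$ forces every inequality used in that proof to be an equality. The conclusions (i)--(v) each come from one link in that chain.

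More precisely, I would first recall that in the proof of Lemma~\ref{lem:3pt-bound} one defines
\[
S = \sum_{(x,y,z)\in C^3} F(x\cdot z, y\cdot z, x\cdot y) = \sum_{k=0}^d \Big\langle F_k, \sum_{(x,y,z)\in C^3} \overline{Y_k^n}(x\cdot z, y\cdot z, x\cdot y)\Big\rangle,
\]
which is nonnegative as a sum of trace-inner-products of positive semidefinite matrices, and then one bounds $S$ from above by splitting the triple sum according to how many of $x,y,z$ coincide and applying in turn the three inequality constraints of~\eqref{eq:3ptsphere}: the diagonal inequality $\sum a_k + F(1,1,1)\leq M-1$, the two-point inequality $\sum a_k P_k^n(u)+3F(u,u,1)\leq -1$, and the three-point inequality $F(u,v,t)\leq 0$. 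A final estimate uses that $\sum_{k=0}^d a_k\sum_{x,y\in C}P_k^n(x\cdot y)\geq 0$, since each $a_k$ is nonnegative and each matrix $(P_k^n(x\cdot y))_{x,y\in C}$ is positive semidefinite by Schoenberg's characterization. The chain concludes with $S\leq |C|(M-|C|)$ and hence $|C|\leq M$.

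The core of the argument is then the standard complementary slackness observation: when $|C|=M$ the two bracketing inequalities $0\leq S\leq |C|(M-|C|)$ collapse, so every intermediate inequality in the chain must hold with equality. I would then read off each item of the corollary by identifying which step it comes from. The equality $S=0$, together with the fact that $S$ is a sum over $k$ of trace pairings of two positive semidefinite matrices, forces each such pairing to vanish, yielding (iv). The saturation of the diagonal constraint for each $x\in C$ gives (i), the saturation of the two-point constraint for each ordered pair of distinct $x,z$ gives (ii), and the saturation of the three-point constraint for each ordered triple of pairwise distinct $x,y,z$ gives (iii). Finally, the additional nonnegative term $\sum_k a_k\sum_{x,y\in C}P_k^n(x\cdot y)$ that was dropped at the last step must itself be zero, and since each summand is nonnegative this forces (v) term by term.

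There is no real obstacle here; the only point that needs a little care is the bookkeeping when splitting $S$ into the three cases (all three equal, exactly two equal, all three distinct) and using that $F(u,v,t)$ is symmetric in its arguments, so that the coefficient $3$ in (ii) correctly accounts for the three orderings $(x,x,z),(x,z,x),(z,x,x)$. Once that split is made explicit and the $\sum a_k P_k^n$ contribution is grouped as $\sum a_k\sum_{x,y\in C}P_k^n(x\cdot y)$, each of (i)--(v) drops out of matching one equality in the chain to one constraint in~\eqref{eq:3ptsphere}.
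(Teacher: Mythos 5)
Your proposal is correct and is exactly the paper's intended argument: the paper does not even write out an explicit proof of Corollary~\ref{cor:3pt-cs}, but presents it as an immediate consequence of the proof of Lemma~\ref{lem:3pt-bound} via the standard complementary slackness reading that $0\leq S\leq |C|(M-|C|)$ collapses when $|C|=M$. Your bookkeeping of which constraint saturates to give each of (i)--(v), including the factor $3$ from the three orderings of a pair with repetition and the reason $\sum_{x,y\in C}P_k^n(x\cdot y)\geq 0$, matches what the paper leaves implicit.
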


In this situation, the procedure described above also gives the possible innerproducts among points in optimal codes, and the distribution of the triples $(u,v,t)$. 

In several cases, this information turns out to be enough to prove the uniqueness of optimal configurations, as we will see in Section~\ref{sec:applications}. 

On the other hand, if we have a candidate optimal configuration and the bound is sharp, then any optimal solution of the semidefinite program will satisfy the complementary slackness conditions implied by this configuration. Adding these conditions to the semidefinite program usually results in a faster convergence of the interior point method. The complementary slackness conditions can be important to get an exact optimal solution. In the next section, we give an automatic procedure to extract an exact optimal solution from a numerical optimal floating point solution.

\section{The rounding procedure}\label{sec:rouding procedure}

In general, the fact that a semidefinite program is defined over a given algebraic field does not ensure that the optimal solution can be defined over the same field. In fact, even if the constraints and the objective are rational, an optimal solution might require high algebraic degree \cite{MR2546336}. However, for each packing problem that we considered where we have a sharp bound, we were able to find an optimal solution over the same field that is required to define the semidefinite program and to formulate the complementary slackness conditions coming from the optimal solution. We first focus on the problem of finding a rational optimal solution, and extend this to quadratic fields in Section~\ref{sec:algrounding}. 

We work with semidefinite programs in block form:
\begin{align*}
\inf \Big\{ \sum_{i=1}^l \mathrm{tr}(C_i &X_i) : X_1,\ldots,X_l \succeq 0,\\[-0.8em]
&\sum_{i=1}^l \mathrm{tr}(A_{i,j} X_i) = b_j \text{ for } j = 1,\ldots,m\Big\},
\end{align*}
where the symmetric matrices $C_i$ and $A_{i,j}$ and the scalars $b_j$ are all rational. 

First, since we already know the optimal objective value, we do not solve the semidefinite program as an optimization problem, but we add a linear constraint enforcing the objective value and solve the problem as a feasibility problem.  One advantage of this is that by rounding the floating point solution from the semidefinite programming solver to a rational solution, such that all linear equations are satisfied, the objective value will automatically be correct. A second advantage is that the numerical interior point solver we use (sdpa-gmp \cite{SDPA}) gives an approximate solution in the relative interior of the feasible set when solving a feasibility problem, which will be important in Section \ref{sec:kernelDetection}. When rounding we have to make sure that the obtained rational matrices are positive semidefinite and satisfy all linear equations.

\subsection{Rounding in the affine space}\label{sec:RoundingAffineSpace}

Let $X_1^*,\ldots,X_l^*$ be the (high precision) floating point output of the semidefinite programming solver. Because of floating point arithmetic they do not satisfy the linear constraints 
\[
\sum_{i=1}^l \mathrm{tr}(A_{i,j} X_i) = b_j \quad \text{for} \quad j = 1,\ldots,m
\]
exactly. First we want an exact solution to the above equations that is close to the approximate solution $X_1^*,\ldots,X_l^*$. We rewrite the above linear conditions as the linear system
$Ax = b$, so that the concatenation $x^*$ of $\mathrm{vec}(X_i^*)$ for $ i= 1, \ldots, l$ is an approximate solution of this system. Here $\mathrm{vec}$ is the vectorization operator that turns a symmetric matrix into a column vector.

We want to find a vector that satisfies the linear system $Ax=b$ exactly and is close to $x^*$. One natural way to do it is the following: We transform the system $Ax=b$ into reduced row echelon form, which can be done in rational arithmetic. Then, when solving this system by backsubstitution, for each free variable that we encounter, we use a rational approximation (possibly with some upper bound on the size of the denominator) of the corresponding entry of $x^*$. We obtain an optimal rational solution of the semidefinite program that satisfies all linear constraints exactly. 

In our applications, the linear systems we are dealing with can be large, even after exploiting the symmetries in the sums-of-squares formulation. We therefore write the system as a linear system over the integers, and use the Kannan-Bachem algorithm \cite{kannan1979} as implemented in \cite{fieker17}, which can solve the biggest system we consider in this paper within eight hours on a normal desktop computer. 

By continuity of roots, the eigenvalues of the blocks in the rounded solution will be close to the eigenvalues of the blocks in the floating point approximate solution. If the floating point solution would not have near zero eigenvalues, then we would be done. However, in our situation, the matrices $X_i^*$ have many eigenvalues close to zero, so that the rounded solution typically has negative eigenvalues.

Since we already solve the problem as a feasibility problem, the solution we are working with lies in the relative interior of the feasible set of our feasibility problem. This means there exists no solution that has fewer near zero eigenvalues. Even if we somehow manage to obtain a solver output $X_1^*,\ldots,X_l^*$ where all near zero eigenvalue are positive, for example by adding constraints of the form $X-\varepsilon I \succeq 0$, then the above rounding procedure is likely to turn some of these into negative numbers. Instead of trying to make these eigenvalues positive, our strategy consists in forcing them to be exactly zero, by adding some linear constraints. 

\subsection{Kernel detection}\label{sec:kernelDetection}

One approach to find these rational linear constraints is to use the complementary slackness condition from Corollary~\ref{cor:compl1} or Corollary~\ref{cor:3pt-cs} arising from a candidate optimal packing configuration.

In \cite{bachoc09b}, after a slight reformulation of the semidefinite program and some additional work (see Section~\ref{sec:sphericalCodes}), these linear constraints are sufficient to get an exact bound. However in general, especially when the matrices involved are large (see Section~\ref{sec:Hemisphere}), this approach does not provide all the linear constraints, even if we also include constraints coming from the derivatives. We therefore propose an automated procedure to find all necessary linear conditions.

By computing the kernel of $X_i^*$ in high precision floating point arithmetic, we get the eigenvectors corresponding to the near zero eigenvalues. We list these vectors as the columns of the matrix $N_i$. These vectors are typically not approximations of rational vectors themselves, but approximations of real linear combinations of rational vectors, without any control on the coefficients. Hence there is no point in trying to round them to rational vectors. Instead, we try to extract a rational basis of this kernel by searching for integer equations defining this linear space. To do so, we use the function \textsf{lindep} from Nemo \cite{fieker17} that uses the LLL algorithm \cite{MR682664} to find an integer linear combination of the rows of $N_i$ that is close to the zero vector. 
 We remove one row of $N_i$ whose coefficient in this linear combination is nonzero, and repeat the procedure to find another integer linear equation, linearly independent from the previous one. We continue until we found the right number of equations. Let $M_i$ be the matrix with these integer relations as its rows. Then we can compute a basis for the nullspace of $M_i$ in rational arithmetic, and these vectors will be the kernel vectors of the rounded version of the matrix $X_i^*$. 
 
\subsection{Rounding and checking}
For each kernel basis vector for $X_i^*$ that we find we add the constraints $\mathrm{tr}(X_i v) = 0$ to the linear system $Ax=b$.  If there indeed exist rational bases for the kernel of each matrix in the solution, then by performing the rounding procedure mentioned above on this extended semidefinite program, we find positive semidefinite matrices that satisfy all linear constraints exactly. Due to the new linear equations, we make sure that the rounded matrices have no negative eigenvalues.

We have to verify that this is indeed a solution. First the linear constraints can be verified in rational arithmetic. Then we need to verify that the matrices are positive semidefinite. To do so, we compute their characteristic polynomial and use the property that a real-rooted polynomial $f(t)$ has no negative roots if and only if $(-1)^{\deg f} f(-t)$ has only nonnegative coefficients.

Using this approach, we find an optimal rational solution for all considered packing problems where our semidefinite programming bound is sharp and where the complementary slackness conditions implied by an optimal configuration are rational.

\subsection{Extension to quadratic fields}\label{sec:algrounding}

Finally we extend the above approach to the case where we want to round over a quadratic field, which is a natural thing to try whenever the semidefinite program and/or the complementary slackness conditions arising from an optimal configuration are defined over such a field.

We first describe how the rounding part from Section~\ref{sec:RoundingAffineSpace} can be adapted. In this situation, $A$ and $b$ of the linear system $Ax = b$ are defined over $\mathbb Q[\sqrt{\ell}]$. We have an approximate floating point solution $x^*$ and we are looking for an exact solution $x=x_1 + x_2\sqrt{\ell}$ over $\mathbb Q[\sqrt{\ell}]$ that is close to $x^*$. After writing $A = A_1 + A_2 \sqrt{\ell}$ and $b = b_1 + b_2 \sqrt{\ell}$ with the entries of $A_1,A_2,b_1,b_2$ rational, we want to find an exact solution of the following rational linear system
\begin{equation}\label{eq:linearquad}
\begin{pmatrix} A_1 & \ell A_2\\ A_2 & A_1 \end{pmatrix} \begin{pmatrix} x_1\\ x_2\end{pmatrix} = \begin{pmatrix} b_1 \\ b_2 \end{pmatrix}
\end{equation}
such that $x_1 + x_2 \sqrt{\ell} \approx x^*$. To do this, we first need to extract from $x^*$ an approximate solution $(x_1^*, x_2^*)$ of the above linear system such that $x_1^* + x_2^* \sqrt{\ell} \approx x^*$.
To find these vectors we write
\[
x_1^* = y \quad \text{and} \quad x_2^* = \frac{1}{\sqrt{\ell}} (x^* - y)
\]
and solve the linear system
\[
\begin{pmatrix} A_1 - \sqrt{\ell} A_2 \\ \sqrt{\ell}A_2 -  A_1 \end{pmatrix}  y = \begin{pmatrix} b_1 - \sqrt{\ell}A_2 x^*\\ \sqrt{\ell} \, b_2  - A_1 x^*\end{pmatrix} \]
for $y$ in floating point arithmetic.
We now reduce the rational linear system \eqref{eq:linearquad}
to reduced row echelon form and use backsubstitution to find a rational vector satisfying this system, where for each free variable we use a rational approximation of the corresponding entry in $(x_1^*,x_2^*)$.

To find the kernel constraints we again first compute an arbitrary basis for the kernel of $X_i^*$ in high precision floating point arithmetic and list the vectors as the columns of the matrix $N_i$. Then we set
\[
M_i = \begin{pmatrix} N_i \\ \sqrt{\ell} N_i \end{pmatrix}
\]
and use the LLL algorithm to find an integer linear combination $(\lambda, \mu)$ such that $(\lambda, \mu)^{\sf T}M_i \approx 0$. Once we find the right number of equations, we can build the matrix $H$ with rows $(\lambda, \ell \mu)^{\sf T}$ and $(\mu, \lambda)^{\sf T}$, for each equation $(\lambda, \mu)$ that we find. By construction, every vector $(u,v)$ in the kernel of $H$ satisfies $X_i^*(u+ v\sqrt{\ell}) \approx 0$, and we want $u+ v\sqrt{\ell}$ to be in the kernel of the rounded version $X_{i,1} + X_{i,2}\sqrt{\ell}$ of $X_i^*$. So every vector in a basis of the kernel of $H$ provides two equations 
\[
X_{i,1}u + \ell X_{i,2}v=0, \quad X_{i,2}u + X_{i,1}v=0
\]
that we add to the linear system \eqref{eq:linearquad}, taking  into account the block structure. Finally, we can apply the rounding procedure in order to get an optimal solution over $\mathbb{Q}[\sqrt{\ell}]$.

\section{Applications}\label{sec:applications}

In this section we determine exact sharp semidefinite programming bounds for packing problems, and we describe how to prove the uniqueness of some optimal configurations by using the information obtained from complementary slackness.

The code for setting up the semidefinite programs, for running the rounding procedure, and for rigorously checking the rounded solution can be found in the ancillary files from the \textsf{arXiv.org} e-print archive. This program runs in Julia 1.1.0 \cite{Julia} and uses the computer algebra system Nemo \cite{fieker17}. See the included README.txt file for information on how to run the code.

\subsection{Codes in spherical caps}\label{sec:Hemisphere}

In 1979, Odlyzko and Sloane \cite{Odlyzko79}, and independently Levenshtein \cite{levenshtein79} proved that the maximal size of a $\pi/3$-code in $S^7$ is $240$. Such a spherical code $C$ is given by the minimal vectors of the root lattice $\mathsf{E}_8$. Moreover, in 1981 Bannai and Sloane \cite{bannai81} showed that the maximal $\pi/3$-code in $S^7$ is unique up to isometry. Let $e$ be an arbitrary element of $C$, then $C \cap  \mathrm{Cap}^7(e, \pi/2)$ is a $\pi/3$-code on the hemisphere $\mathrm{Cap}^7(e, \pi/2)$ with cardinality $183$. In 2007, Bachoc and Vallentin \cite{bachoc092} prove that the maximal size of such a code is indeed $183$. Here we prove that a $\pi/3$-code in $\mathrm{Cap}^7(e, \pi/2)$ with maximal size is unique up to isometry

\begin{lemma}\label{lemma:InnerproductsHemisphere}
Let $C$ be a $\pi/3$-code in $\mathrm{Cap}^7(e, \pi/2)$ with cardinality $183$. Then: 
\begin{enumerate}
\item[(i)] for every $c\in C$, $$e \cdot c \in \{0, 1/2, 1\},$$
\item[(ii)] for any distinct $c,c' \in C$, 
$$c \cdot c' \in \{- 1, \pm 1/2, 0\}.$$ 
\end{enumerate}
\end{lemma}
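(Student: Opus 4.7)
The plan is to exploit the complementary slackness conditions of Corollary~\ref{cor:compl1} applied to an exact optimal rational feasible solution of the semidefinite program \eqref{eq:hemisphere} for $n=8$, $\vartheta = \pi/3$, $\varphi = \pi/2$ with objective $M = 183$. Running the rounding procedure of Section~\ref{sec:rouding procedure} on the numerical output yields such a solution $(183, F_0, \ldots, F_9)$, which I fix once and for all; write
\[
F(u,v,t) = \sum_{k=0}^{9}\bigl\langle F_k, \overline{Y_k^8}(u,v,t)\bigr\rangle \in \mathbb{Q}[u,v,t].
\]

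For (i), I consider $P(u) := F(u,u,1) - 182 \in \mathbb{Q}[u]$. By Corollary~\ref{cor:compl1}(ii), the number $e \cdot c$ is a root of $P$ for every $c \in C$, and $c \in \mathrm{Cap}^7(e,\pi/2)$ forces $e \cdot c \in [0,1]$. It then suffices to factor $P$ over $\mathbb{Q}$ and verify that its roots in $[0,1]$ are exactly $\{0, 1/2, 1\}$; this is a routine certified computation on a univariate rational polynomial.

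For (ii), for each pair $(u,v)$ drawn from the values in (i), I introduce the polynomial $P_{u,v}(t) := F(u,v,t) + 1 \in \mathbb{Q}[t]$. By Corollary~\ref{cor:compl1}(iii), for distinct $c, c' \in C$ with $e \cdot c = u$ and $e \cdot c' = v$, the inner product $c \cdot c'$ is a root of $P_{u,v}$ lying in $[-1, 1/2]$ and satisfying the Gram determinant inequality $1 + 2uvt - u^2 - v^2 - t^2 \geq 0$. The cases $u = 1$ or $v = 1$ reduce to $c = e$ or $c' = e$ and contribute only the values $0$ and $1/2$; the remaining cases $(u,v) \in \{0, 1/2\}^2$ are handled by factoring each $P_{u,v}$ over $\mathbb{Q}$ and intersecting its real roots with the feasible region. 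Taking the union, the values obtained should be exactly $\{-1, -1/2, 0, 1/2\}$.

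The main obstacle is not the polynomial-root analysis above but producing the exact feasible solution in the first place. This relies entirely on the machinery of Section~\ref{sec:rouding procedure}: solving the SDP to high precision, detecting the kernel of each block via LLL, adding the corresponding rational linear constraints, and then solving the extended rational linear system with Kannan--Bachem. After the $uv$-symmetrization of Section~\ref{sec:2pt} this system still has tens of thousands of variables at $d = 9$, so the kernel detection and exact linear algebra dominate the work; once the solution is fixed, the factorizations above are performed in exact rational arithmetic, and positive semidefiniteness of the $F_k$ is certified via the sign pattern of the characteristic polynomial as explained in Section~\ref{sec:rouding procedure}.
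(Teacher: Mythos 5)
Your proposal is correct and follows essentially the same strategy as the paper: obtain an exact rational feasible solution to the two-point spherical-cap bound \eqref{eq:hemisphere} (the paper's proof actually cites \eqref{eq:3ptsphere}, but this is a typo since the solution format and Corollary~\ref{cor:compl1} both correspond to \eqref{eq:hemisphere}), then use Corollary~\ref{cor:compl1}(ii) and (iii) to localize $e\cdot c$ and $c\cdot c'$ as roots of the rational univariate polynomials $F(u,u,1)-182$ and $F(u_0,v_0,t)+1$ in the appropriate intervals. The only superficial difference is that the paper makes the root-counting explicit via Sturm sequences whereas you call it a routine certified computation after factoring over $\mathbb{Q}$ (one must of course also rule out irrational roots in the relevant intervals, e.g.\ via Sturm), and your observation that $u=1$ or $v=1$ forces $c=e$ or $c'=e$ is a small simplification the paper does not make.
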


\begin{proof} 
Let $(F_0, \ldots, F_9)$ be the exact optimal solution obtained by applying our rounding procedure to the floating point output (available as \textsf{183points.jls} in the arXiv version of this paper) that we obtained by solving \eqref{eq:3ptsphere} for $d = 9$, and consider
\[
F(u,v,t) = \sum_{k=0}^9\langle F_k, \overline{Y^n_k} (u,v,t) \rangle.
\]

Let us first prove (i). Since $C$ in an optimal configuration, following (ii) in Corollary \ref{cor:compl1}, every $c\in C$ has to satisfy
\[
F(e\cdot c, e\cdot c, 1) = 182.
\]
This means that $e \cdot c$ has to be a root of the univariate polynomial $$g(u)=F(u,u,1)-182$$ located in $\Delta_0=[0,1]$. By computing its Sturm sequence, we check that the polynomial $g$ has exactly three distinct roots in the interval $(-\varepsilon, 1]$ for a fixed, arbitrary, small enough $\varepsilon > 0$. Since $g(0)=g(1/2)=g(1)=0$, $g$ cannot have any further roots in $[0,1]$.

In order to prove (ii), consider any distinct $c,c'$ in $C$. Following  (iii) in Corollary \ref{cor:compl1}, we have 
\[
F(e \cdot c, e \cdot c', c \cdot c') = -1.
\]
Due to (i), $e \cdot c$ and $e \cdot c'$ have to be in $\{0, 1/2, 1\}$. As a consequence, $c \cdot c'$  is a root of the univariate polynomial $$h(t)=F(u_0, v_0, t)+1$$ for some $u_0, v_0 \in \{0, 1/2, 1\}$. Moreover $$c \cdot c'
\in\{t\in \R: (u_0,v_0,t)\in \Delta\}.$$ Using the same procedure involving the Sturm sequence, we check that all those possible roots lie in $\{- 1, \pm 1/2, 0\}$. 
\end{proof}

Due to the properties given in Lemma \ref{lemma:InnerproductsHemisphere}, we can prove that $\mathsf{E}_8$ provides the unique optimal $\pi/3$-code in $\mathrm{Cap}^7(e, \pi/2)$.
\begin{theorem}\label{thm:UniquenessHemisphere}
There is, up to symmetry, a unique $\pi/3$-code in $\mathrm{Cap}^7(e, \pi/2)$ with $183$ elements.
\end{theorem}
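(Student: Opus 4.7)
The plan is to extend $C$ to a $240$-point $\pi/3$-code on the full sphere $S^7$ and then invoke the classical uniqueness theorem for this extremal configuration (the $\mathsf{E}_8$ root system \cite{bannai04}). Concretely, set
\[
C' := C \cup \{-c : c\in C,\, e\cdot c>0\}.
\]
The two pieces are disjoint because every $c\in C$ has $e\cdot c\geq 0$ while every reflected element has $e\cdot c<0$, so $|C'|=183+(n_1+n_{1/2})$, where $n_s$ denotes the number of $c\in C$ with $e\cdot c=s$. By Lemma~\ref{lemma:InnerproductsHemisphere} the pairwise inner products in $C$ lie in $\{-1,-1/2,0,1/2\}$, and a short case analysis using the strict positivity $e\cdot c>0$ in the definition of the reflected half (which rules out coincidences $c=-c'$ within $C'$ as well as a mixed inner product equal to $1$) shows the same set of values bounds all pairwise inner products in $C'$. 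Hence $C'$ is a $\pi/3$-code in $S^7$.

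Next I would pin down the layer distribution. From $|C'|\leq 240$ we get $n_0\geq 126$, while the equatorial points form a $\pi/3$-code on $S^6$ and hence satisfy $n_0\leq 126$ (the kissing number in dimension $7$). Thus $n_0=126$, $n_1+n_{1/2}=57$, and $|C'|=240$. Writing each middle-layer point as $c=\tfrac{1}{2}e+\tfrac{\sqrt{3}}{2}c^{\perp}$ with $c^{\perp}\in S^6$, the admissible pairwise inner products $\{-1/2,0,1/2\}$ between such points translate into $c^{\perp}_i\cdot c^{\perp}_j\in\{-1,-1/3,1/3\}$. This is an equiangular line configuration in $\mathbb{R}^7$, and the absolute bound caps the number of such lines at $28$, i.e.\ $n_{1/2}\leq 56$. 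Combined with $n_1\in\{0,1\}$ this forces $n_1=1$ (so $e\in C$) and $n_{1/2}=56$.

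To finish, Bannai-Sloane supplies an isometry $\phi$ of $\mathbb{R}^8$ mapping $C'$ onto the $\mathsf{E}_8$ minimal vectors; since $e\in C\subseteq C'$, the image $\phi(e)$ is itself such a minimal vector. The Weyl group of $\mathsf{E}_8$ acts transitively on its roots, so after composing $\phi$ with an automorphism of the root system we may arrange $\phi(e)$ to equal any prescribed reference minimal vector. Because $C=C'\cap\{x:e\cdot x\geq 0\}$ by construction, this identifies $C$ with the standard $183$-point intersection of the $\mathsf{E}_8$ minimal vectors with a fixed hemisphere, yielding uniqueness up to isometry. The main technical hurdle is the layered accounting in paragraph two: the doubling trick only delivers the extremal $240$-point code because the two a priori upper bounds $n_0\leq 126$ (kissing number) and $n_{1/2}\leq 56$ (equiangular lines absolute bound in $\mathbb{R}^7$) happen to be tight simultaneously, a dimension-seven coincidence at the heart of the argument.
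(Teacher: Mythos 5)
Your plan — doubling $C$ by reflection to obtain a full $240$-point $\pi/3$-code on $S^7$ and then invoking the Bannai--Sloane uniqueness theorem — is a genuinely different route from the paper's. The paper works with the lattice $L=\sqrt{2}\,\langle C\rangle_{\mathbb Z}$: Lemma~\ref{lemma:InnerproductsHemisphere}(ii) makes $L$ integral, it is spanned by norm-$2$ vectors so it is a rank-$8$ root lattice, and a comparison of the ratio (number of roots)/(rank) over all irreducible root lattices shows $L=\mathsf E_8$; the linear equations $n_1+n_2=183$ and $n_1+2n_2=240$ then give $n_2=57$, and the nonexistence of a $57$-point $1/3$-code in $S^6$ (Bannai--Sloane) forces $e\in C$. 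Your first and third paragraphs — the verification that $C'$ is a $\pi/3$-code, the translation of the middle layer into equiangular lines in $\mathbb R^7$, and the transitivity of the Weyl group on roots of $\mathsf E_8$ — are all sound.

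The gap is in your second paragraph. You assert that ``the equatorial points form a $\pi/3$-code on $S^6$ and hence satisfy $n_0\leq 126$ (the kissing number in dimension $7$).'' But the kissing number $\tau_7$ in dimension $7$ is an open problem: only $\tau_7\geq 126$ (from $\mathsf E_7$) is proved, and every rigorous upper bound, including the semidefinite programming bounds, lies strictly above $126$. Without $n_0\leq 126$, your constraints — $|C'|\leq 240$, $n_{1/2}\leq 56$ from the absolute bound on equiangular lines, $n_1\leq 1$ — all run in the same direction and only give $n_0\geq 126$; they are consistent with $n_0>126$, $n_{1/2}+n_1<57$, and $|C'|<240$, in which case Bannai--Sloane does not apply. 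The paper's lattice argument circumvents exactly this obstruction: it deduces $L=\mathsf E_8$ directly from the rational structure of the inner products, with no appeal to the unresolved seven-dimensional kissing number.
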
 
\begin{proof}
Let $L_0$ be the additive subgroup of $\R^8$ spanned by $C$. If $x$ and $y$ are two vectors in $L_0$, according to (ii) in Lemma \ref{lemma:InnerproductsHemisphere}, the inner product $\sqrt{2} x \cdot \sqrt{2} y $ is an integer. So the additive subgroup $L=\sqrt{2}L_0$ is an integral lattice.  Moreover since it is spanned by a set of vectors $v$ such that $v \cdot v =2$, $L$ has to be a root lattice. This means it is a direct sum of some irreducible root lattices $\mathsf{A}_d, \mathsf{D}_d, \mathsf{E}_6, \mathsf{E}_7, \mathsf{E}_8 $. Assume that $L= \oplus_{i=1}^k L_i$ for some $k$, where $L_i$ is an irreducible root lattice for every $i=1,\ldots,k$. We denote by $r_i$ the number of roots of $L_i$ and by $d_i$ its rank. The number of roots of the irreducible root lattices is well known (see \cite{conway99}), and if $L_i$ is not $\mathsf{E}_8$, then we have $r_i/d_i < 183/8$. Hence if $L$ is not $\mathsf{E}_8$, its number of roots $r$ satisfies
\[
r=\sum_{i=1}^k r_i = \sum_{i=1}^k d_i\frac{r_i}{d_i} < 183.
\]
So $L=\mathsf{E}_8$. 

There are $240$ roots in $\mathsf{E}_8$. Among the corresponding $240$ points of $L_0$ in $S^7$, let $n_1$ and $n_2$ be the numbers of points lying respectively on the equator $\{ x \in S^7: x\cdot e =0 \}$ of $S^7$ and on the strict upper hemisphere $\{ x \in S^7: x\cdot e > 0 \}$. By symmetry we have $n_1+2n_2=240$. On the other hand, on the same hemisphere we cannot have more than the $183$ points coming from $C$, so $n_1 + n_2 = 183$. Thus the strict upper hemisphere of $S^7$ contains exactly $n_2=57$ elements of $C$. Suppose that $e$ is not in $C$. Then, following (i) in Lemma \ref{lemma:InnerproductsHemisphere},  $57$ elements of the code would lie in $\{ x \in S^7: x\cdot e = 1/2 \}$. This would give a $\vartheta$-code in $S^6$ with $57$ elements, where $\vartheta$ is such that $\cos \vartheta = 1/3$. Bannai and Sloane \cite[Theorem 9]{bannai81} proved that such a code does not exist. So $e$ has to be an element of $C$, and $C$ is the configuration that we expect.
\end{proof}

One might wonder whether this approach can be used in dimension $24$. However, whereas the Leech lattice $\Lambda_{24}$ provides a configuration with $144855$ points on the hemisphere, the best upper bound that we obtained numerically is only $158611$, with $d=13$. 

\subsection{Spherical codes}\label{sec:sphericalCodes}

In \cite{bachoc09b}, Bachoc and Vallentin proved the following theorem.

\begin{theorem}\label{thm:PetersenCode}
The Petersen code is, up to symmetry, the unique $\vartheta$-code in $S^3$ of cardinality $10$, where $\vartheta$ is such that $\cos \vartheta = 1/6$. 
\end{theorem}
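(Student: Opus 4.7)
The plan is to emulate the strategy used to prove Theorem~\ref{thm:UniquenessHemisphere}. First I would set up the three-point semidefinite program \eqref{eq:3ptsphere} for $n=4$, $\cos\vartheta = 1/6$, and degree $d$ large enough that the numerical bound is visibly sharp at $M=10$ (one expects $d$ on the order of the value used in \cite{bachoc09b}). Adding the linear constraint $M=10$ turns the program into a feasibility problem; solving it with \textsf{sdpa-gmp} and applying the rounding procedure of Section~\ref{sec:rouding procedure} should produce an exact rational feasible solution $(10, a_0,\ldots,a_d, F_0,\ldots,F_d)$. This already gives a certified proof of the upper bound $|C|\le 10$.

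The heart of the argument is then to extract, from this exact solution, the geometric structure of any $\vartheta$-code $C$ in $S^3$ with $|C|=10$. By condition~(ii) of Corollary~\ref{cor:3pt-cs}, for every pair of distinct $x,y\in C$ the inner product $u = x\cdot y$ is a root, lying in $[-1,1/6]$, of the univariate polynomial
\[
p(u) \,=\, \sum_{k=0}^d a_k P_k^4(u) + 3F(u,u,1) + 1.
\]
Computing the Sturm sequence of $p$ in rational arithmetic yields the finite set $R$ of admissible pairwise inner products. Condition~(iii) then constrains, for each ordered pair $(u_0,v_0)\in R\times R$, the third inner product $x\cdot y$ to be a root of $t\mapsto F(u_0,v_0,t)+1$ that satisfies $(u_0,v_0,t)\in\Delta$; another Sturm computation enumerates these roots. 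Together, these furnish the finite list of possible triples $(x\cdot z, y\cdot z, x\cdot y)$ realized in any optimal $C$.

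With $R$ and the triple inner products classified, conditions~(iv) and~(v) of Corollary~\ref{cor:3pt-cs} give linear equations whose nonnegative integer solutions determine the distance distribution of $C$. The final step, which I expect to be the main obstacle, is to upgrade this combinatorial data to uniqueness up to isometry. I expect $R$ to consist of exactly two values, so that $C$ is a $2$-distance set on $S^3$; the graph whose edges record the larger of the two inner products will then be forced by the computed multiplicities to be a regular graph on $10$ vertices with strongly prescribed triangle counts. One then argues that these parameters force the graph to be the Petersen graph, and that its spherical embedding with the prescribed Gram matrix is rigid modulo $O(4)$, identifying $C$ with the Petersen code. Any remaining case analysis to disambiguate the graph from its complement is handled by the same kind of explicit geometric reasoning employed in \cite{bachoc09b}, but the advantage of our approach is that the exact SDP solution, and hence the list of admissible inner products and multiplicities, is obtained automatically from the rounding procedure rather than by ad hoc manipulation.
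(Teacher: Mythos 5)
Your proposal is essentially the same approach the paper takes: solve \eqref{eq:3ptsphere} as a feasibility problem at $M=10$ (the paper uses $d=6$), round to an exact rational solution, extract the admissible inner products $\{-2/3,\,1/6\}$ via condition~(ii) and a Sturm sequence, determine the three-point distance distribution from conditions~(iv) and~(v) together with the counting identities, and then invoke the final combinatorial-rigidity argument of Bachoc and Vallentin to identify $C$ with the Petersen code. The only minor difference is that the paper goes directly from the two admissible inner products to the six distribution values by symmetry, rather than first enumerating triples via condition~(iii).
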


Their approach, based on semidefinite optimization, consists of three steps: First, they provide an exact optimal solution for a slightly different formulation of problem (\ref{eq:3ptsphere}). From this solution, they derive the three points distance distribution of an optimal code. Finally they prove that the Petersen code is the only code satisfying this distribution. However, the way they obtain their exact optimal solution is not straightforward. First of all, their new formulation of problem (\ref{eq:3ptsphere}) gives further information about the solution matrices $(F_0, F_1, F_2)$, in case they provide an optimal solution. These properties help to determine an exact optimal solution of their semidefinite program, after several steps and various computations. However the strategy that they use seem to apply only in this specific situation. Especially, because for this problem the corresponding solution of problem (\ref{eq:3ptsphere}) is small, since taking $d = 2$ is sufficient. 

In our framework, the first step of their approach gets much easier: we can directly solve problem (\ref{eq:3ptsphere}), and turn the approximate solution from the solver into an exact optimal solution by using our rounding procedure. Then we need to check that our solution also implies the three points distance distribution of an optimal code.

\begin{proof}[Proof of Theorem \ref{thm:PetersenCode}]

Let $C$ be a $\vartheta$-code in $S^3$ with cardinality $10$, where $\vartheta$ is such that $\cos \vartheta = 1/6$. The three points distance distribution of $C$ is defined by 
\[
\alpha(u,v,t)= \frac{1}{|C|} \left|\left\{ (c,c',c'')\in C^3: c\cdot c'=u, c\cdot c''=v, c '\cdot c''=t \right\}\right|.
\]

In order to determine this distribution, we first need to know the possible inner products between the elements of $C$. Let $(M,a_0,\ldots,a_d,F_0,\ldots,F_d)$ be the exact optimal solution obtained by applying our rounding procedure to the floating point output (available as \textsf{10points.jls} in the arXiv version of this paper) that we obtained by solving \eqref{eq:3ptsphere} for $d = 6$. Using the complementary slackness condition given in (ii) in Corollary \ref{cor:3pt-cs}, the inner product between two distinct elements of $C$ must be a root of the polynomial 
\[
\sum_{k=0}^d a_k P_k^n(u) + 3\sum_{k=0}^d \langle F_k,\overline{Y_k^n}(u,u,1)\rangle + 1
\] located between $-1$ and $1/6$. By using a Sturm sequence, we check that the only possible roots are $-2/3$ and $1/6$. 

Now, due to symmetries, the three points distance distribution of $C$ is defined by the six values 
\begin{center}
\begin{tabular}{lll}
 $\alpha(1,1,1)$, & $\alpha(1/6,1/6,1)$, & $\alpha(-2/3,-2/3,1)$, \\  $\alpha(1/6,1/6,1/6)$, & $\alpha(-2/3,1/6,1/6)$, & $\alpha(-2/3,-2/3,1/6)$.
\end{tabular}
\end{center}

We already know $\alpha(1,1,1)=1$. By combining equation $\sum_{u} \alpha(u,u,1) = 10$ with the complementary slackness conditions given by v) in Corollary \ref{cor:3pt-cs}, we get $\alpha(1/6,1/6,1)=6$ and $\alpha(-2/3,-2/3,1)=3$. Then by solving the linear system involving $\sum_{u,v,t} \alpha(u,v,t) = 100$ together with the equations given by iv) in Corollary \ref{cor:3pt-cs}, we get
\begin{center}
\begin{tabular}{lll}
  $\alpha(1/6,1/6,1/6)=18$, & $\alpha(-2/3,1/6,1/6)=12$, & $\alpha(-2/3,-2/3,1/6)=6$.
\end{tabular}
\end{center}

So we recovered the distribution described in \cite{bachoc09b}, and we can apply their last argument to prove that $C$ has to be the Petersen code.
\end{proof}

The optimal value of the program (\ref{eq:3ptsphere}) is an upper bound on the cardinality of a $\vartheta$-code in $S^2$ where $\vartheta$ is such that $\cos \vartheta = (2\sqrt{2}-1)/7$. By using our rounding approach we obtain an exact solution with value $8$.  Sch\"utte and van der Waerden \cite{Schuette51} proved that this is the optimal value. 
The \emph{square antiprism} provides a spherical code with $8$ points in $S^2$. Due to Danzer \cite{Danzer86} this code is unique up to symmetry. His proof rely on heavy geometric arguments: For example, understanding the angles and distances which may occur in such a configuration requires a very technical analysis. Here we recover this information thanks to the complementary slackness conditions, and provide a new proof of the uniqueness of such a code. 

\begin{theorem}\label{thm8pts}
The largest $\vartheta$-code in $S^2$ with $\cos \vartheta = (2\sqrt{2}-1)/7$ is unique up to symmetry.
\end{theorem}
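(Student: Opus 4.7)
The plan is to follow the three-step strategy used for Theorem~\ref{thm:PetersenCode}, but now working over $\mathbb{Q}[\sqrt{2}]$ rather than $\mathbb{Q}$. First, I would apply the rounding procedure of Section~\ref{sec:algrounding} to the floating point output of a solver run on problem~\eqref{eq:3ptsphere} with $n=3$, $\cos\vartheta=(2\sqrt{2}-1)/7$, and a suitable degree $d$; the authors have already asserted that this yields an exact feasible solution $(M,a_0,\ldots,a_d,F_0,\ldots,F_d)$ with $M=8$ whose entries lie in $\mathbb{Q}[\sqrt{2}]$.

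Second, I would extract the combinatorial structure of any optimal code $C$ from the complementary slackness conditions in Corollary~\ref{cor:3pt-cs}. By condition (ii), every inner product between two distinct points of $C$ must be a root in $[-1,\cos\vartheta]$ of the univariate polynomial
\[
p(u) = \sum_{k=0}^d a_k P_k^3(u) + 3\sum_{k=0}^d \langle F_k,\overline{Y_k^3}(u,u,1)\rangle + 1.
\]
A Sturm sequence computation performed exactly in $\mathbb{Q}[\sqrt{2}]$ should reveal exactly the three roots $(2\sqrt{2}-1)/7$, $(4\sqrt{2}-9)/7$ and $(3-6\sqrt{2})/7$ that are realized by the vertices of the square antiprism. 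Combining the global counts $\sum_u \alpha(u,u,1)=8$ and $\sum_{u,v,t}\alpha(u,v,t)=64$ with conditions (iv) and (v) of Corollary~\ref{cor:3pt-cs} then gives, by solving a linear system just as in the Petersen proof, the full three-point distance distribution $\alpha(u,v,t)$ of $C$.

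Third, I would deduce from this distribution that $C$ is isometric to the square antiprism. The expected outcome is that each vertex of $C$ has a unique partner at inner product $(4\sqrt{2}-9)/7$, so these pairs form a perfect matching of $C$ into four ``diagonal'' pairs; grouping the matching into two sets of two pairs whose four cross-inner-products are all equal to $(2\sqrt{2}-1)/7$ should then yield the two parallel squares of the antiprism, and the remaining inner products $(3-6\sqrt{2})/7$ should pin down their relative angular offset.

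The main obstacle is this last geometric reconstruction, which takes the place of Danzer's technical argument. Unlike the Petersen case, where one rides on the combinatorics of a strongly regular graph, here I must reconstruct a specific configuration in $\mathbb{R}^3$ from a list of inner-product multiplicities. I would handle it by fixing three suitably chosen vertices using the action of $O(3)$, computing the admissible coordinates of the remaining five vertices from the prescribed inner products one at a time, and verifying that the resulting system of Gram-matrix constraints has a unique solution up to isometry.
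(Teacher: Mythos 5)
Your first two steps match the paper's proof closely: the paper rounds to $\mathbb{Q}[\sqrt{2}]$ with $d=7$, and the inner products $(2\sqrt{2}-1)/7$, $-3(2\sqrt{2}-1)/7$ and $-(2\sqrt{2}-1)^2/7$ you guess agree with the ones it finds (your $(3-6\sqrt{2})/7$ and $(4\sqrt{2}-9)/7$ are just simplified forms of the last two). The paper only needs the two-point distribution $\alpha(u,u,1) = 1,4,1,2$ (for inner products $1, v, (1-2\sqrt{2})v, -3v$ with $v=(2\sqrt{2}-1)/7$), obtained from condition~(v) of Corollary~\ref{cor:3pt-cs}; it never computes the full three-point distribution, so your extra work solving for all $\alpha(u,v,t)$ is unnecessary (though not wrong).

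The real gap is in your third step. You claim that ``each vertex of $C$ has a unique partner at inner product $(4\sqrt{2}-9)/7$,'' but the distribution $\alpha((1-2\sqrt{2})v,(1-2\sqrt{2})v,1)=1$ only says that there are $|C|=8$ ordered pairs at that inner product, i.e.\ that the count is $1$ \emph{on average}. Some vertex could a priori have two such partners while another has none, which would wreck your matching argument. The paper confronts the analogous issue differently and more robustly: it works at the \emph{closest} distance, where the count is $4$ on average, and proves a matching upper bound --- one cannot fit more than four mutually compatible code points on the circle $\mathcal{C}_{\delta_1}(c)$ --- so that the average forces exactly four for every $c$. It then builds the configuration constructively: the four closest neighbours of $c_1$ are rigid up to rotation, the circles around $c_3$ and $c_4$ already contain three of their four points each, which nails down $c_6,c_7$, and the intersection of the circles around $c_2$ and $c_5$ forces $c_8$. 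If you want to salvage your matching-based reconstruction, you would first need to supply an upper bound of $1$ on the number of code points on the circle at distance $\delta_2=\sqrt{2-2(1-2\sqrt{2})v}$ from a fixed code point, which is less obvious than the paper's bound of $4$ at distance $\delta_1$; and the subsequent step of recognizing the two squares from cross-inner-products would also need a careful case analysis, rather than just an appeal to Gram-matrix uniqueness. As written, your proposal asserts pointwise structure that the complementary slackness data alone does not give.
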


\begin{proof}
Let $(M,a_0,\ldots,a_d,F_0,\ldots,F_d)$ be the exact optimal solution obtained by applying our rounding procedure to the floating point output (available as \textsf{8points.jls} in the arXiv version of this paper) that we obtained by solving \eqref{eq:3ptsphere} for $d = 7$. Furthermore, let $C$ be an optimal spherical code. Due to the complementary slackness condition given in (ii) in Corollary \ref{cor:3pt-cs}, the real roots of the polynomial
\[
\sum_{k=0}^d a_k P_k^n(u) + 3\sum_{k=0}^d \langle F_k,\overline{Y_k^n}(u,u,1)\rangle + 1
\] located between $-1$ and $\frac{2\sqrt{2}-1}{7}$ give the possible inner products between two points in $C$. By using its Sturm sequence, we can check that the only inner products are $$(2\sqrt{2}-1)/7, ~~-3(2\sqrt{2}-1)/7, ~~-(2\sqrt{2}-1)^2/7.$$

As in the proof of Theorem \ref{thm:PetersenCode}, the exact solution, together with the complementary slackness condition v) in Corollary \ref{cor:3pt-cs} and the properties of the three points distribution, gives
\begin{equation*}
\begin{aligned}
\alpha\left(1,1,1  \right) &= 1, &\alpha\left(v, v,1\right) &= 4, \\
\alpha\left(-3v, -3v,1\right) &= 2, &\alpha\left((1-2\sqrt{2})v,(1-2\sqrt{2})v ,1\right) &= 1.
\end{aligned}
\end{equation*}
where $v = (2\sqrt{2}-1)/7$.

This is already enough information for proving the uniqueness of the configuration. We want to show that $C$ is, up to rotations, the squared antiprism $S_8$. We label with $c_1,\ldots,c_8$ the vertices of $S_8$, as depicted in Figure~\ref{fig:8pt}. 

\begin{figure}[!h]
\includegraphics[scale=0.6]{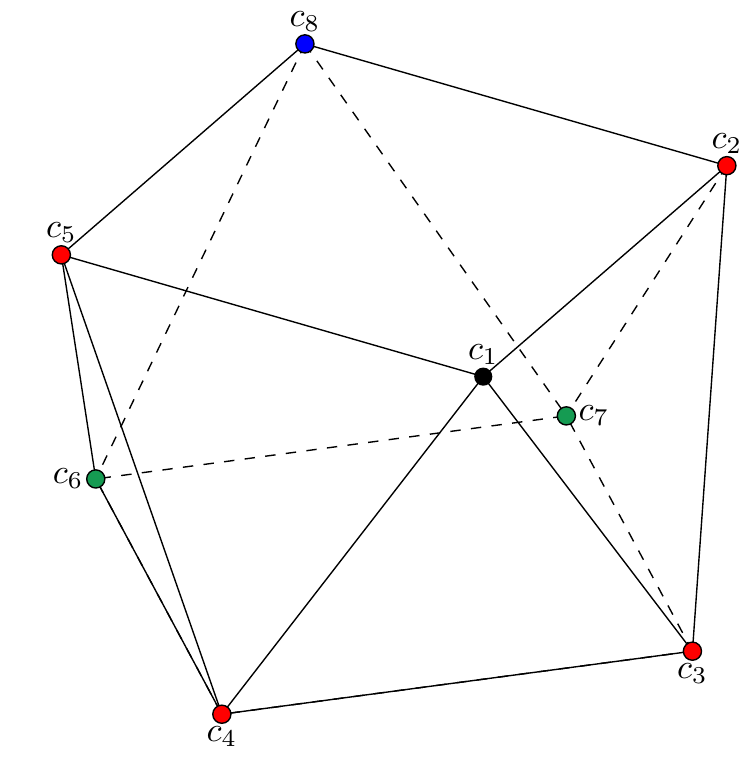}
\caption{The squared antiprism.}\label{fig:8pt}
\end{figure}

Up to symmetry, we may assume that $c_1$ is in $C$. The possible inner products imply that for any $c \in C$, the distance between $c$ and $c_1$ is among
\[
\delta_1= \sqrt{2-2v}, \quad \delta_2= \sqrt{2-2(1-2\sqrt{2})v}, \quad \delta_3= \sqrt{2+6v}.
\]
Note that $\delta_1 < \delta_2 < \delta_3$. In Figure~\ref{fig:8pt}, the points $c_2$, $c_3$, $c_4$ and $c_5$, depicted in red, are at distance $\delta_1$ from $c_1$, the blue point $c_8$ is at distance $\delta_2$ from $c_1$, and the green points $c_6$ and $c_7$ are at distance $\delta_3$ from $c_1$.
For every $c\in C$, we define the circle $\mathcal{C}_{\delta_1}(c)$ made of the points of $S^2$ at distance $\delta_1$ from $c_1$. On the one hand, since $\alpha(v,v,1)=4$, on average over $C$, the circle $\mathcal{C}_{\delta_1}(c)$ contains four points of $C$. On the other hand, it is straightforward to check that one cannot put more than four points on such a circle without violating the distance constraints. This means that for every $c\in C$, there are exactly four elements of the code $C$ in $\mathcal{C}_{\delta_1}(c)$.

In particular, $\mathcal{C}_{\delta_1}(c_1)$ contains four points of $C$. In fact, there is, up to rotations, only one way to put four compatible points on such a circle. Hence, me way assume that these four points are $c_2$, $c_3$, $c_4$, and $c_5$ (see Figure~\ref{fig:circle}).

\begin{figure}[!h]
\includegraphics[scale=0.7]{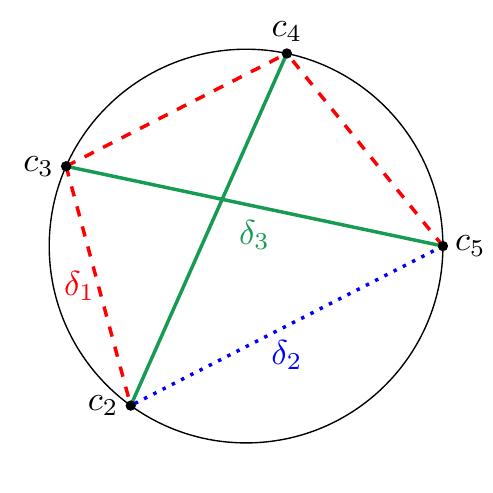}
\caption{The only four point configuration in $\mathcal{C}_{\delta_1}(c)$.}\label{fig:circle}
\end{figure}

Three points remain. On the circle $\mathcal{C}_{\delta_1}(c_3)$, we already know three points out of four. This leaves two possibilities for the last codeword on that circle, and the same situation holds in $\mathcal{C}_{\delta_1}(c_4)$. This gives four candidates, but among them, only two are compatible: $c_6$ and $c_7$. Finally, the last remaining point has to lie in the intersection of $\mathcal{C}_{\delta_1}(c_2)$ and $\mathcal{C}_{\delta_1}(c_5)$, and the distance constraints force it to be $c_8$. 
\end{proof}

\subsection{Sphere in sphere packings}\label{sec:ballsinballs}

The optimal solution of the semidefinite program \eqref{sdp:ballsinBalls} is an upper bound on the number of spheres of radius $r$ that can be packed into a sphere of radius $R$.  
Using our program we determine exact sharp solutions for several values of $r$ and $R$ in various dimensions. In this section we show that we can extract bounds that work for infinitely many different dimensions by extrapolating from the rounded solutions for a few dimensions.  In this way we show that the  Lov\'asz $\vartheta$-number gives a sharp bound  for some families of problems.

\begin{theorem}\label{thm:spheresInspheres}
The Lov\'asz $\vartheta$-number gives a sharp bound on the largest number $M$ of $n$-dimensional unit spheres that can be packed into a sphere of radius $R$, for
\begin{itemize}
\setlength\itemsep{0.3em}
\item[(i)] $n\geq 2$ with $R = 2$ and $M = 2$;
\item[(ii)] $n\geq 2$ with $R = 2/\sqrt{3}+1$ and $M = 3$;
\item[(iii)] $n\geq 2$ with $R = \smash{\sqrt{2n/(n+1)}} + 1$ and $M =n+1$;
\item[(iv)] $n \geq 2$ with $R = \smash{\sqrt{2}}+1$ and $M=2n$;
\item[(v)] $n = 2$ with $R = 1 + \smash{\sqrt{2\left(1 + 1/\sqrt{5}\right)}}$ and $M = 5$;
\item[(vi)] $n = 2$ with $R = 3$ and $M = 7$.
\end{itemize}       
\end{theorem}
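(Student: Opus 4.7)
The plan is to prove each of the six equalities by pairing an explicit packing of $M$ unit balls inside a ball of radius $R$ with a matching feasible solution of the two-point bound \eqref{sdp:ballsinBalls} achieving objective $M$.

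The lower bounds come from classical configurations. For (i), (ii), and (iii) place the $M$ centers at the vertices of a diameter, of an equilateral triangle, or of a regular simplex inscribed in a sphere of radius $R-1$; a short computation shows that the pairwise distance is exactly $2$. For (iv) place the $2n$ centers at $\pm\sqrt{2}\,e_i$, which is the cross-polytope whose minimum edge length is $2$. For (v) take the $5$ centers at the vertices of a regular pentagon inscribed in a circle of radius $R-1 = 1/\sin(\pi/5)$, and for (vi) take the $7$ centers to be the vertices of a regular hexagon of circumradius $2$ together with the origin. In every case the unit balls fit in the outer ball of radius $R$.

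For the matching upper bound I would run \eqref{sdp:ballsinBalls} numerically for small dimensions at a modest degree $d$ and then apply the rounding procedure from Section~\ref{sec:rouding procedure}. The isolated planar cases (v) and (vi) are settled in one pass by this computation. For the parametric families (i)--(iv), one computes the exact rounded solution for a handful of small $n$, inspects the entries of the blocks $F_k$ and of the SOS Gram matrices, and tries to extrapolate a closed-form family $F_k(n)$. The complementary slackness conditions implied by the guessed optimal packing (analogous to Corollary~\ref{cor:compl1}, restricted here to the finite set of pairs (norm, inner product) appearing in the configuration) pin down the values at which the inequalities $F(u,u,u^2)\leq M-1$ and $F(u,v,t)\leq -1$ must be saturated, and this strongly constrains the search. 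Once a candidate $F_k(n)$ is in hand, verifying that it is feasible for all $n\geq 2$ reduces to signing finitely many univariate polynomials in $n$, namely the leading principal minors of the $F_k(n)$ and of the Gram matrices of the SOS multipliers, which is handled by Sturm sequences.

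The main obstacle will be the uniform-in-$n$ verification for the simplex family (iii) and the cross-polytope family (iv): the relevant Gram matrices grow in size with the degree of the SOS certificate, and checking positive semidefiniteness for all $n \geq 2$ may require a careful block-diagonalization or Schur-complement argument. A secondary subtlety is that the degree $d$ that works asymptotically may differ from the smallest $d$ that yields a sharp bound at the lowest $n$; if so, the low-dimensional solutions have to be recomputed at a common $d$ before the extrapolation step can succeed.
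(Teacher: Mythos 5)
Your plan matches the paper's strategy closely: exhibit known configurations for the lower bounds, and obtain the matching upper bounds from exact feasible solutions of \eqref{sdp:ballsinBalls}, found by rounding for (v)/(vi) and by extrapolating rounded low-dimensional solutions to a closed-form family in $n$ for (i)--(iv). The main difference is in the final verification step of the parametric families: you propose to certify positive semidefiniteness by signing the leading principal minors of $F_k(n)$ and of the SOS Gram matrices as univariate polynomials in $n$ (via Sturm sequences), whereas the paper instead computes symbolic Cholesky factorizations once and writes the needed inequalities $-1-F(u,v,t)\ge 0$ and $M-1-F(u,u,u^2)\ge 0$ as explicit weighted sums of squares whose coefficients are manifestly nonnegative in the parameter range; this gives a self-contained human-checkable certificate rather than a computer-verified sign condition, but both are valid. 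Your secondary worry is well founded and is exactly what happens: the closed-form family for (iv) has coefficients containing factors $n-5$, so it only certifies $n\ge 5$, and the paper treats $n=2,3,4$ separately by direct rounding. One small addition you should make explicit: for (v) the exact solution lives in $\mathbb{Q}[\sqrt{5}]$, so the rounding step there needs the quadratic-field extension of Section~\ref{sec:algrounding}, not the plain rational rounding.
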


To obtain the proof for (i)-(iv)	 we first compute an exact sharp solution for several dimensions. From this we notice we can take the same degree $d$ for each dimension $n$, and we make a guess as to what the general solution of the semidefinite program should be as a function of $n$. For the sums-of-squares matrices we then compute the Cholesky decomposition using symbolic mathematics, which gives us the sums-of-squares decomposition proving that the solution satisfies the right inequalities. Even if we stated the theorem in an unified way with $r=1$, we present some of the proofs with different scaling, in order to simplify the coefficients appearing in the solutions.
\begin{proof}
(i) Set $M = 2$. Define the positive semidefinite matrices
\[
F_0 = \frac{2}{5} \begin{pmatrix} 1 & -1 \\ -1 & 1 \end{pmatrix}, 
\quad  F_1 = \begin{pmatrix} 1 \end{pmatrix},
\]
and set
\begin{align*}
F(u,v,t) := \sum_{k=0}^1 \left\langle F_k,\overline{Z_k^n}(u,v,t)\right\rangle = \frac{2}{5}\big( uv - u - v + t + 1\big).
\end{align*}
Since 
\begin{align*}
F(u,v,t) = - 1 &~ - \frac{3}{5}\left(1 - \frac{1}{2}\left(u + v\right)\right)^2 - \frac{7}{20}\left(v-u\right)^2 \\ 
&- u\left(1 - u\right) - v\left(1 - v\right) - \frac{1}{2}\left(u^2+v^2 -2t - 4\right) 
\end{align*}
we have $F(u,v,t) \leq -1$ for all $u,v,t$ with $0 \leq u,v \leq 1$ and $u^2+v^2-2t -4 \geq 0$.
Furthermore, since
\[
F(u,u,u^2) = 1 - \frac{3}{5}\left(1 - u\right)^2 - 2u\left(1-u\right),
\]
we have
$
F(u,u,u^2) \leq M-1$ for all $0 \leq u \leq 1$. 
The tuple $(M, F_0, F_1)$ thus defines a feasible solution for problem (\ref{sdp:ballsinBalls}) with $R = 2$ and $r=1$, which shows the bound is sharp.\\

(ii) We construct a feasible solution for problem \eqref{sdp:ballsinBalls} for $n \geq 2$ with $r = 2/\sqrt{3}$ and $R = (2/\sqrt{3}+1)r = 4/3 + 2/\sqrt{3}$: By considering the positive semidefinite matrices
\[
F_0 = \begin{pmatrix} 64/81 & -16/27 \\ -16/27 & 4/9 \end{pmatrix}, 
\quad  F_1 = \begin{pmatrix} 9/8 \end{pmatrix},
\]
we obtain the function
\begin{align*}
F(u,v,t) = \sum_{k=0}^1 \left\langle F_k,\overline{Z_k^n}(u,v,t)\right\rangle = \frac{64}{81} + \frac{4}{9}uv  - \frac{16}{27} (u + v) + \frac{9}{8}t.
\end{align*}
Since the function satisfies the equation 
\begin{align*}
F(u,v,t)  = &~-1 -  2\left(7/9 - 7/24\left(u + v\right)\right)^2 - 113/288\left(v-u\right)^2 \\ 
&- 9/8u\left(4/3 - u\right) - 9/8v\left(4/3 - v\right) - 9/16\left(u^2+v^2 -2t -16/3\right) .
\end{align*}
we have
$F(u,v,t) \leq -1$
for all $u,v,t$ with $0 \leq u,v \leq 4/3$ and $u^2+v^2-2t -16/3 \geq 0$.
Furthermore,
$$F(u,u,u^2) =  2 -2\left(7/9 - 7/12u\right)^2 - 9/4u\left(4/3-u\right),$$
so, for all $0 \leq u \leq 4/3,$ we have $F(u,u,u^2) \leq 2$.
The function $F(u,v,t)$ together with the parameter $M=3$ gives a feasible solution for problem (\ref{sdp:ballsinBalls}) with $r=2/\sqrt{3}$ and $R = (2/\sqrt{3}+1)r$.
\\

(iii) We construct a feasible solution for problem \eqref{sdp:ballsinBalls} with $r=\sqrt{2n/(n+1)}$, $R = (\sqrt{2n/(n+1)}+1)r = 2n/(n+1) + \sqrt{2n/(n+1)}$ and $n \geq 2$: By considering the positive semidefinite matrices
\[
F_0 = \begin{pmatrix} 2n/(n+1) & -1 \\ -1 & (n+1)/(2n) \end{pmatrix}, 
\quad  F_1 = \begin{pmatrix} \frac{(n+1)^2}{4n} \end{pmatrix},
\]
we define the function 
\begin{align*}
F(u,v,t) = \sum_{k=0}^1 \left\langle F_k^n,\overline{Z_k^n}(u,v,t)\right\rangle = \frac{2n}{n+1} - u - v + \frac{n+1}{2n}uv  + \frac{(n+1)^2}{4n}t.
\end{align*}
Since the following equation holds
\begin{align*}
-F(u,v,t) -1 = &~ \left(\sqrt{\frac{n(n-1)}{n+1}} + \frac{1}{4}\sqrt{\frac{n^2-1}{n}} (u+v) \right)^2 + \frac{n^2 +4n +3}{16n}(v-u)^2 \\ 
&+ u\left(\frac{2n}{n+1} - u\right)\frac{(n+1)^2}{4n} + v\left(\frac{2n}{n+1} - v\right)\frac{(n+1)^2}{4n} \\
&+ \left(u^2+v^2 -2t - 4\frac{2n}{n+1}\right) \frac{(n+1)^2}{8n},
\end{align*}
we have $F(u,v,t) \leq -1$ for all $u,v,t$ with $0 \leq u,v \leq \frac{2n}{n+1}$ and $u^2+v^2-2t -4\frac{2n}{n+1} \geq 0$.
Moreover, 
$$n - F(u,u,u^2) =  \left(\sqrt{\frac{n(n-1)}{n+1}} - \sqrt{\frac{n^2-1}{4n}} u \right)^2 + u\left(\frac{2n}{n+1} - u\right)\frac{(n+1)^2}{2n},$$
so $F(u,u,u^2) \leq n$ for all  $0 \leq u \leq 2n/(n+1)$.

The function $F(u,v,t)$ together with the parameter $M=n+1$ gives a feasible solution for problem (\ref{sdp:ballsinBalls}) with $r=\sqrt{2n/(n+1)}$ and $R = (\sqrt{2n/(n+1)}+1)r$.
\\

(iv) The cases $n=2,3,4$ are done separately through our rounding procedure; see \textsf{proofs.jl} in the arXiv version of this paper. For $n \ge 5$ we determine a feasible solution for \eqref{sdp:ballsinBalls} with $r=\sqrt{2}$ and $R = (\sqrt{2} + 1)r = 2 + \sqrt{2}$. For this we define the positive semidefinite matrices
\[
F_0 = \begin{pmatrix}0 & 0 & 0 \\ 0 & 0 & 0 \\ 0 & 0 & 0 \end{pmatrix}, 
\quad F_1 = \begin{pmatrix} n & -\frac{1}{4}n \\ -\frac{1}{4}n & \frac{1}{16}n \end{pmatrix}, 
\quad  F_2 = \begin{pmatrix} \frac{1}{16}(n-1) \end{pmatrix},
\]
and set  
\begin{align*}
F(u,v,t) &= \sum_{k=0}^2 \left\langle F_k,\overline{Z_k^n}(u,v,t)\right\rangle\\
&= nt - \frac{1}{4}nt(u+v) + \frac{1}{16}nuvt  + \frac{1}{16}nt^2 - \frac{1}{16}u^2v^2.
\end{align*}
To prove that $F(u,v,t) \leq -1$ for all $(u,v,t) \in \Omega$ we want to find sum-of-squares polynomials $q_1(u,v,t), \ldots, q_5(u,v,t)$ such that $-1-F(u,v,t)$ can be written as
\begin{align*}
q_1(u,v,t) &+ u(2-u)q_2(u,v,t) + v(2-v)q_3(u,v,t) \\
&+ (t+uv)(uv-t)q_4(u,v,t) + (u^2+v^2-2t-8) q_5(u,v,t).
\end{align*}
Since $0 \leq u,v \leq 2$ the condition $-uv \leq t \leq uv$ in $\Omega$ holds if $-4 \leq t \leq 4$. Now we can show that $F(u,v,t) \leq -1$ for all $(u,v,t) \in \Omega$, since $-1-F(u,v,t)$ is equal to 
\begin{align*}
q_1(u,v,t) &+ u(2-u)q_2(u,v,t) + v(2-v)q_3(u,v,t) \\
&+ (t+4)(4-t)q_4(u,v,t) + (u^2+v^2-2t-8) q_5(u,v,t).
\end{align*}
where
\begin{align*}
&q_1(u,v,t) =\frac{1}{14}\left(\frac{35}{144}t^2 - \frac{151}{432}tu - \frac{125}{432}tv + \frac{9}{4}t\right)^2\\
&\quad + \frac{25}{238}\left(\frac{25}{144}t^2 + \frac{31}{432}tu + \frac{119}{432}tv\right)^2 + \frac{145}{238}\left(\frac{1}{16}t^2 + \frac{1}{8}tu\right)^2 \\
&\quad+ \frac{1}{21}\left(\frac{5}{16}tu + \frac{25}{16}tv + \frac{7}{4}uv - \frac{15}{4}t - \frac{7}{2}u\right)^2  \\
&\quad+ \frac{n-5}{2}\biggl(\frac{5}{192}\left(3t^2 - 2tu + 8tv\right)^2 
	+ \frac{1}{48}\left(3t^2 - 8tu - 10tv + 48t\right)^2  \\
&\quad+ \frac{21}{64}\left(t^2 + 2tu\right)^2 + \frac{1}{4}\left(tu + 5tv + 8uv - 12t - 16u\right)^2\biggr),\\
\end{align*}
\begin{align*}
&q_2(u,v,t) =\frac{1}{7982}\left(\frac{65}{8}t + \frac{307}{12}u\right)^2 + \frac{2095}{117888}t^2 + \frac{1}{858}\left(\frac{63}{8}u - \frac{65}{16}v\right)^2 \\
&\quad+ \frac{1}{22}\left(\frac{5}{24}u - \frac{25}{16}v + \frac{11}{2}\right)^2+ (n-5)\biggl(\frac{1}{3110}\left(\frac{15}{8}t + \frac{311}{36}u\right)^2   \\
&\quad+ \frac{1461}{622}\left(\frac{1}{24}t\right)^2+ \frac{1}{10}\left(\frac{19}{72}u - \frac{5}{16}v\right)^2+ \frac{3}{2}\left(\frac{1}{72}u - \frac{5}{48}v + \frac{1}{2}\right)^2\biggr),\\
&q_3(u,v,t) = \frac{7}{51}\left(\frac{15}{16}t + \frac{17}{16}u\right)^2 + \frac{1}{2}\left(\frac{1}{8}t + \frac{5}{16}u - \frac{5}{12}v\right)^2\\
&\quad + \frac{5}{2}\left(\frac{1}{16}u + \frac{1}{12}v - \frac{1}{2}\right)^2 + \frac{n-5}{430}\biggl(30\left(\frac{7}{16}t + \frac{43}{48}u\right)^2\\
&\quad + 43\left(\frac{1}{8}t + \frac{5}{16}u - \frac{5}{12}v\right)^2 + \frac{1837}{384}t^2 \\
&\quad+ 215\left(\frac{1}{16}u + \frac{1}{12}v - \frac{1}{2}\right)^2\biggr) + \frac{23}{51}\left(\frac{1}{16}t\right)^2,\\
&q_4(u,v,t) = \frac{n-5}{2}\left(\left(\frac{1}{16}t\right)^2 + \left(\frac{13}{96}u + \frac{11}{96}v - \frac{1}{2}\right)^2 + 23\left(\frac{1}{96}\left(u - v\right)\right)^2\right)\\
&\quad + \frac{5}{512}t^2 + \left(\frac{53}{288}u + \frac{55}{288}v - \frac{3}{4}\right)^2 + 215\left(\frac{1}{288}\left(u - v\right)\right)^2,\\
&q_5(u,v,t) = n\left(\left(\frac{1}{16}t - \frac{1}{24}u - \frac{1}{12}v + \frac{1}{2}\right)^2 + \left(\frac{1}{16}t + \frac{1}{8}u\right)^2 + \frac{1}{72}\left(u - v\right)^2\right).
	\end{align*} 
Next, we show $F(u,u,u^2) \leq 2n-1$ for all $0 \leq u \leq 2$. For this we define $$ f(u) = F(u,u,u^2) -2n + 1 = \frac{1}{16}\left(2n -  1\right)u^4 - \frac{1}{2}nu^3 + nu^2 - 2n + 1.$$ Note that $f(2) = 0$. It is then sufficient to prove that the polynomial
\[
g(u) = f(u) / (u-2) = \frac{2n-1}{16} u^3 + \frac{-2n-1}{8} u^2 + \frac{2n-1}{4}u + \frac{2n-1}{2}
\]
is positive for every $u$ in $[0,2]$.
 Its discriminant
$$\frac{1}{16}\left(-11n^4 + 21n^3 - 47/4n^2 + 5/2n - 1/4 \right)$$
 is negative, since $-11n^4 + 21n^3$ and $-\frac{47}{4}n^2 + \frac{5}{2}n$ are both negative. Hence $g(u)$ has only one real root. This root must be negative, because the leading term of $g$ is positive and $g(0)=n-1/2>0$. Thus $g$ is positive for every $u\geq 0$. In particular $f(u)\leq 0$ for all $u$ in $[0,2]$, for every $n\geq 5$. 

The function $F(u,v,t)$ together with the parameter $M=2n$ gives a feasible solution for problem (\ref{sdp:ballsinBalls}) with $r=\sqrt{2}$ and $R = (\sqrt{2} + 1)r$.
\\

(v/vi) The exact optimal solution as well as the verification of the solution, can be obtained by running our program as described in \textsf{proofs.jl}. Here the exact solution for (v) is over $\mathbb{Q}[\sqrt{5}]$ and for (vi) over $\mathbb{Q}$.
\end{proof}

\section{Acknowledgements}
We are very thankful to Christine Bachoc, Henry Cohn, Mathieu Dutour Sikiri\'c, Frank Vallentin, and Guillermo Fernández Castro for helpful discussions and suggestions. This material is based upon work supported by the National Science Foundation under Grant No.  DMS-1439786 while the authors were in residence at the Institute for Computational and Experimental Research in Mathematics in Providence, RI, during the ``Point Configurations in Geometry, Physics and Computer Science'' semester program. P.M. received support from the Troms\o \ Research Foundation grant 17matteCR.

\end{document}